\newcommand\cE{{\mathcal E}}
\newcommand\cH{{\mathcal H}}
\newcommand\cI{{\mathcal I}}
\newcommand\cM{{\mathcal M}}
\newcommand\cR{{\mathcal R}}
\newcommand\cU{{\mathcal U}}
\theoremstyle{plain}
\newtheorem{theorem}{Theorem}[section]
\newtheorem{lemma}[theorem]{Lemma}
\newtheorem{proposition}[theorem]{Proposition}
\theoremstyle{definition}
\newtheorem{claim}[theorem]{Claim}
\newcommand\lref[1]{Lemma~\ref{lem:#1}}
\newcommand\tref[1]{Theorem~\ref{thm:#1}}
\newcommand\cref[1]{Corollary~\ref{cor:#1}}
\newcommand\clref[1]{Claim~\ref{clm:#1}}
\newcommand\sref[1]{Section~\ref{sec:#1}}
\begin{document}

\title{The minimum number of vertices in uniform hypergraphs with given domination number}

\author{Csilla Bujt\'as\fnref{fn1}}
\ead{bujtas@dcs.uni-pannon.hu}
\address{Department of Computer Science and Systems
 Technology, University of Pannonia,
   \ 8200 Veszpr\'em, Egyetem u.\ 10, Hungary and Alfr\'ed R\'enyi Institute of Mathematics, Hungarian Academy of Sciences, P.O.B. 127, Budapest H-1364, Hungary.}
   
\author{Bal\'azs Patk\'os\fnref{fn2}}
\ead{patkos@renyi.hu}
\address{Alfr\'ed R\'enyi Institute of Mathematics, Hungarian Academy of Sciences, P.O.B. 127, Budapest H-1364, Hungary.}

\author{Zsolt Tuza\fnref{fn1}}
\ead{tuza@dcs.uni-pannon.hu}
\address{Department of Computer Science and Systems
 Technology, University of Pannonia,
   \ 8200 Veszpr\'em, Egyetem u.\ 10, Hungary and Alfr\'ed R\'enyi Institute of Mathematics, Hungarian Academy of Sciences, P.O.B. 127, Budapest H-1364, Hungary.}

\author{M\'at\'e Vizer\fnref{fn1}}
\ead{vizermate@gmail.com}
\address{Alfr\'ed R\'enyi Institute of Mathematics, Hungarian Academy of Sciences, P.O.B. 127, Budapest H-1364, Hungary.}

\fntext[fn1]{Research supported in part by the National Research, Development and Innovation
Office -- NKFIH under the grant SNN 116095.}

\fntext[fn2]{Research supported in part by the National Research, Development and Innovation
Office -- NKFIH under the grant SNN 116095 and by
 the J\'anos Bolyai Research Scholarship of the Hungarian Academy of Sciences}

\begin{abstract}
The \textit{domination number} $\gamma(\cH)$ of a hypergraph $\cH=(V(\cH),\cE(\cH))$ is the minimum size of a subset $D\subset V(\cH)$ of the vertices such that for every $v\in V(\cH)\setminus D$ there exist a vertex $d \in D$ and an edge $H\in \cE(\cH)$ with $v,d\in H$. We address the problem of finding the minimum number $n(k,\gamma)$ of vertices that a $k$-uniform hypergraph $\cH$ can have if $\gamma(\cH)\ge \gamma$ and $\cH$ does not contain isolated vertices. We prove that $$n(k,\gamma)=k+\Theta(k^{1-1/\gamma})$$ and also consider the $s$-wise dominating and the distance-$l$ dominating version of the problem. In particular, we show that the minimum number $n_{dc}(k,\gamma, l)$ of vertices that a connected $k$-uniform hypergraph with distance-$l$ domination number $\gamma$ can have is roughly $\frac{k\gamma l}{2}$.
\end{abstract} 

\maketitle

\section{Introduction}
In this paper we establish basic inequalities involving fundamental hypergraph parameters such as order, edge size, and domination number.

Many problems in extremal combinatorics are of the following form: 
what is the smallest or largest size that a graph, hypergraph, set system can have, provided it satisfies a prescribed property? In most cases, size is measured by the number of edges, hyperedges, sets, respectively, contained in the object, and the number of vertices is usually included in the prescribed property. However, sometimes it can be interesting and even applicable to consider problems about the minimum or maximum number of vertices \cite{NP,T1,T2}.

In the present paper we address the problem of finding the minimum number of vertices in a $k$-uniform hypergraph that has a large domination number. The domination number $\gamma(G)$ of a graph $G$, a widely studied notion (see \cite{HHS1}, \cite{HHS2}), is the smallest size that a subset $D \subset V(G)$ of the vertices can have if every vertex $v \in V(G)\setminus D$ has a neighbor in $D$. 

We will be interested in the hypergraph version of this notion, which was investigated first in \cite{A1} and later studied in \cite{A2,ABBT,BHT,HLo,JT}. 
Let $\cH=(V(\cH), \cE(\cH))$ be a hypergraph.
The \textit{neighborhood}\footnote{In this paper we use the short term  ``neighborhood'', although this is called ``closed neighborhood'' in the main part of the literature. We note that the inclusion of $\{v\}$ in the definition of $N_v$ may be omitted if $v$ is not an isolated vertex in $\cH$.}  of a vertex $v \in V(\cH)$
 is the set $N_v:=\{v\} \cup \bigcup _{E \in \cE(\cH)\,:v \in E} E$, and the
 \textit{neighborhood  of a set} $S\subset V(\cH)$
 is defined as $N(S):=\bigcup _{v \in S} N_v$. 
A set $D \subset V(\cH)$ is called a \textit{dominating set} of $\cH$ if $D \cap N_v \neq \emptyset$ for all $v \in V(\cH)$. Equivalently we can say that $D$ is a dominating set if and only if $N(D) = V(\cH)$. The minimum size $\gamma(\cH)$ of a dominating set in a hypergraph $\cH$ is \textit{the domination number} of $\cH$.
As all isolated vertices always are contained in every dominating set, they can be eliminated in an obvious way, therefore we restrict our attention to hypergraphs without isolates.

Let $n(k,\gamma)$ be the minimum number of vertices that a $k$-uniform hypergraph with no isolated vertices must contain if its domination number is at least $\gamma$. Beyond the trivial case of $n(k,1)=k$, the problem of determining $n(k,\gamma)$ is natural and seems to be interesting enough to be addressed on its own right; nevertheless, Gerbner et al.\ (Problem 17 in \cite{Getal}) arrived from a combinatorial search-theoretic framework at the particular problem of deciding whether $n(k,3)\ge 2k+3$ holds or not. We answer this problem in the negative, 
 determining the asymptotic behavior of $n(k,\gamma)$
 as a function of $k$ for every fixed $\gamma$,
 up to the exact growth order of the second term.
To state our result in full strength, we need to introduce two generalizations of domination. For an integer $s>0$ we call $D \subset V(\cH)$ an \textit{s-dominating set} of $\cH$ if $|D \cap N_v| \ge s$ for all $v \in V(\cH)\setminus D$ and we call $D$ an \textit{s-tuple dominating set} if $|D \cap N_v| \ge s$ for all $v \in V(\cH)$. Note that dominating sets are exactly the $1$-dominating sets and $1$-tuple dominating sets.
As introduced in \cite{FJ} and \cite{HH}, respectively, the minimum size $\gamma(\cH,s)$ of an $s$-dominating set in a hypergraph $\cH$ is the \textit{$s$-domination number of\/ $\cH$} and the minimum size $\gamma_{\times}(\cH,s)$ of an $s$-tuple dominating set in a hypergraph $\cH$ is the \textit{$s$-tuple domination number\/\footnote{The standard notation for $s$-tuple domination in the graph theory literature is $\gamma_{\times s}(G)$, but for the different variants of domination in this paper we try to use notations which are similar to each other in their form, this is why we put $s$ in another position.} of\/ $\cH$}. By definition, we have $\gamma(\cH,s)\le \gamma_{\times}(\cH,s)$. For every pair 
 $\gamma,s$
of integers with $\gamma \ge s$, let $n(k,\gamma,s)$ denote the minimum number of vertices that a $k$-uniform hypergraph $\cH$ must have if $\gamma(\cH,s)\ge \gamma$  holds and there exist no isolated vertices in $\cH$ and let $n_{\times}(k,\gamma,s)$ denote the minimum number of vertices that a $k$-uniform hypergraph $\cH$ must have if $\gamma_{\times}(\cH,s)\ge \gamma$  holds and there exist no isolated vertices. From the above, we have $n_{\times}(k,\gamma,s)\le n(k,\gamma,s)$.

\vspace{3mm}

Our main theorem about $s$-domination is the following.

\begin{theorem}
\label{thm:main}
For every $\gamma \ge 2$ and $ s\ge 1$ with $\gamma > s$ we have $$k+k^{1-1/(\gamma-s+1)} \le n_{\times}(k,\gamma,s)\le n(k,\gamma,s) \le k+(4+o(1))k^{1-1/(\gamma-s+1)}.$$ 
\end{theorem}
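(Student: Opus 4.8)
To prove \tref{main} I would establish the two outer inequalities separately, the middle one $n_{\times}(k,\gamma,s)\le n(k,\gamma,s)$ being already recorded. Throughout, write $d:=\gamma-s+1$ (so $d\ge 2$), and for a $k$-uniform hypergraph on $n$ vertices put $\tau:=n-k$; for a vertex $v$ let $\overline{N_v}:=V(\cH)\setminus N_v$. Since $N_v$ contains every edge through $v$ and there are no isolated vertices, $|\overline{N_v}|\le\tau$ for every $v$.

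\smallskip
\noindent\emph{Lower bound.} Suppose $\cH$ is $k$-uniform, without isolated vertices, and $\gamma_{\times}(\cH,s)\ge\gamma$. A set $D$ fails to be $s$-tuple dominating exactly when $|D\cap N_v|\le s-1$, i.e.\ $|D\cap\overline{N_v}|\ge|D|-s+1$, for some $v$; hence every $(\gamma-1)$-set $D\subseteq V(\cH)$ has a vertex $v$ with $|D\cap\overline{N_v}|\ge\gamma-s=d-1$. Fix an edge $E_0$. For $v\in E_0$ we have $E_0\subseteq N_v$, so $\overline{N_v}\cap E_0=\emptyset$; therefore if $D\subseteq E_0$ the corresponding $v$ must lie in $V(\cH)\setminus E_0$, a set of size exactly $\tau$. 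Each $(\gamma-1)$-subset of $E_0$ then contains a $(d-1)$-subset of $\overline{N_v}\cap E_0$ for some such $v$, while a fixed $v$ can be charged this way by at most $\binom{\tau}{d-1}\binom{k}{s-1}$ of them (pick the $d-1$ members of $D$ inside $\overline{N_v}\cap E_0$, the remaining $s-1$ arbitrarily in $E_0$). Double counting gives
\[
\binom{k}{\gamma-1}\ \le\ \tau\,\binom{\tau}{d-1}\binom{k}{s-1},
\]
and since $\binom{k}{\gamma-1}/\binom{k}{s-1}=\Theta(k^{d-1})$ while $\tau\binom{\tau}{d-1}\le\tau^{d}/(d-1)!$, this forces $\tau=\Omega(k^{1-1/d})$; with a little care about lower-order terms (cleanly when $s=1$) one obtains $\tau\ge k^{1-1/d}$ for $k$ large, and for small $k$ the bound is trivial because then $n\ge 2k$. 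Hence $n_{\times}(k,\gamma,s)\ge k+k^{1-1/d}$.

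\smallskip
\noindent\emph{Upper bound.} I would build an extremal hypergraph from a covering design. Take a ground set $X$ with $|X|=N_0$ and a family $\{F_1,\dots,F_M\}$ of $\tau_0$-subsets of $X$ such that every $(d-1)$-subset of $X$ lies in some $F_j$; by the R\"odl nibble (in the Pippenger--Spencer form, valid here since the relevant codegrees are $o(\text{degree})$ once $\tau_0=o(N_0)$) one may take $M=(1+o(1))\binom{N_0}{d-1}/\binom{\tau_0}{d-1}$, all $|F_j|=\tau_0$, with no point of $X$ lying in all the $F_j$. On $V:=X\cup\{v_1,\dots,v_M\}$ (new vertices) let $\cH$ have the edges $E_j:=(X\setminus F_j)\cup\{v_j\}$; then $\cH$ is $(N_0-\tau_0+1)$-uniform and has no isolated vertex, and we set $N_0:=k+\tau_0-1$ so that $\cH$ is $k$-uniform. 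To see $\gamma(\cH,s)\ge\gamma$, take any $D$ with $|D|=\gamma-1$ and write $D=D_X\cup D_V$, $D_X=D\cap X$. It suffices to find $v_j\notin D$ with $|D\cap N_{v_j}|=|D\cap E_j|\le s-1$ (note $v_j$ lies only in $E_j$, so $N_{v_j}=E_j$); equivalently, since $v_j\notin D_V$, a block $F_j$ with $|D_X\cap F_j|\ge|D_X|-s+1$ and $v_j\notin D_V$. If $|D_X|\le s-1$ any $v_j\notin D_V$ works; otherwise choose an $\ell$-subset $S\subseteq D_X$ with $\ell:=|D_X|-s+1\le d-1$. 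The design covers all sets of size $\le d-1$, so some block contains $S$; moreover a short count shows each set of size $\le d-2$ lies in at least $N_0/\tau_0\ge\gamma$ (for $k$ large) distinct blocks, whereas $\ell=d-1$ forces $D_V=\emptyset$. In either case at most $|D_V|\le\gamma-1$ of the blocks containing $S$ have their private vertex in $D$, so a usable $v_j$ remains, and $D$ is not $s$-dominating. Finally $N_0=k+\tau_0-1=(1+o(1))k$, whence
\[
n-k=M+\tau_0-1=(1+o(1))\Bigl(\tau_0+(k/\tau_0)^{d-1}\Bigr);
\]
choosing $\tau_0$ of order $k^{1-1/d}$ makes this $(4+o(1))k^{1-1/d}$ (the optimal choice $\tau_0\sim(d-1)^{1/d}k^{1-1/d}$ even gives the constant $\le 2$).

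\smallskip
\noindent\emph{Main obstacle.} The delicate point is the constant in the upper bound: a random or greedily built covering design with blocks of size $\tau_0\sim k^{1-1/d}$ uses $\Theta(k^{1-1/d}\log k)$ blocks and would only yield $n-k=O(k^{1-1/d}\log k)$, so one genuinely needs a near-optimal covering design in the regime where the block size tends to infinity --- which is exactly where the R\"odl/Pippenger--Spencer machinery is needed --- together with the (fortunately automatic) observation that covering all $(d-1)$-sets already covers every smaller set $\ge N_0/\tau_0$ times, which is what makes the private vertices $v_j$ available for every choice of $D$. A lesser loose end is pinning down the exact constant $1$ in the lower bound when $s\ge2$, where the crude counting above loses a bounded factor depending on $\gamma$ and $s$ that must be recovered or absorbed.
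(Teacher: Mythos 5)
Your lower-bound argument is genuinely different from the paper's and, for $s=1$, it does work: the witness $v$ for a $(\gamma-1)$-subset $D$ of a fixed edge $E_0$ must lie outside $E_0$, and comparing $\binom{k}{\gamma-1}\le\tau\binom{\tau}{\gamma-1}$ factor by factor (rather than through the crude $\tau^{\gamma}/(\gamma-1)!$ estimate) really does force $\tau\ge k^{1-1/\gamma}$; the paper instead forms the auxiliary graph of pairs not covered by a common hyperedge and argues via a maximum matching, so your route is more elementary. For $s\ge2$, however, your count only gives $\tau\ge(1-o(1))\binom{\gamma-1}{s-1}^{-1/d}k^{1-1/d}$ (with your $d=\gamma-s+1$), and you explicitly leave the recovery of the coefficient $1$ open; since the theorem asserts coefficient $1$ for every $s$, this is a genuine gap, not a cosmetic one. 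The paper never counts directly for $s\ge2$: it passes to $s=1$ with parameter $\gamma-s+1$, using that deleting $s-1$ vertices from an $s$-tuple dominating set leaves a dominating set, and only then runs its $s=1$ argument; some reduction of this kind, rather than sharper counting, is what your write-up lacks. (Also, ``for small $k$ the bound is trivial because $n\ge 2k$'' is not right as stated; the trivial case is $\tau>k$, and with the exact comparison no case distinction on $k$ is needed.)

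Your upper-bound skeleton is, after unwinding, the same as the paper's: the paper's edge $H_i$ omits exactly the classes $A_j$ indexed by the hyperplanes through the point $E_i$ of a $d$-dimensional space over $\mathbb{F}_q$, so its ``blocks'' are the pencils $\bigcup_{j:E_i\le U_j}A_j$ (size about $q^{d-1}$ after blowing each class up to size $q$), every $(d-1)$-subset of the ground set lies in some pencil because $d-1$ hyperplanes of a $d$-space meet nontrivially, and your verification that no $(\gamma-1)$-set $s$-dominates is this argument in dual language. The genuine gap is the existence of the near-optimal covering design itself: with $\tau_0\asymp k^{1-1/d}$ the block size grows polynomially in $N_0$, so the auxiliary hypergraph you would feed to the nibble is $\binom{\tau_0}{d-1}$-uniform with edge size polynomial in its number of vertices, whereas the R\"odl/Pippenger--Spencer theorems are proved for bounded (or very slowly growing) uniformity --- a small-codegree hypothesis alone does not put you in their range, and plain random choice loses the $\log$ factor you yourself mention. (For $d=2$ the design is a trivial partition, so your construction is fine there; the problem is $d\ge3$.) This is precisely where the paper does something different: it builds the covering explicitly from the point/hyperplane incidence of $PG(d-1,q)$, getting $k=q^{d}+1$ and second term at most $4k^{1-1/d}$, and reaches arbitrary $k$ by padding the edges with a dual $(q+1)$-arc together with the density of primes ($q'=q+o(q)$). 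If you can genuinely supply the design in your regime (by a covering theorem for growing block size, or algebraically --- at which point you have rederived the paper's construction), your route has the advantages you note, namely no prime-power interpolation and constant $2$ instead of $4$; as written, though, the central existence statement is unsupported.
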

 
Another generalization of domination is distance-$l$ domination, which was introduced by Meir and Moon in \cite{MM}. This notion has been studied only for graphs so far. A good survey of the results until 1997 is \cite{H}. For more recent upper and lower bounds on the distance-$l$ domination number of graphs see \cite{HL} and \cite{DFHK}. 

In distance-$l$ domination a vertex $v$ dominates all  vertices that are at distance at most $l$ from $v$. As the definition of distance in graphs involves paths, and paths in hypergraphs can be defined in several ways, distance-$l$ domination could be addressed with each of those definitions. But as we will remark in \sref{discuss}, only so-called `Berge paths' offer new problems in our context. A \textit{Berge path} of length $l$ is a sequence $v_0,H_1,v_1,H_2,v_2,\dots,H_l,v_l$ with $v_i \in V(\cH)$ for $i=0,1,...,l$ and $v_{i-1},v_i\in H_i\in \cE(\cH)$ for $i=1,2,...,l$. The distance $d_{\cH}(u,v)$ of two vertices $u,v \in V(\cH)$ is the length of a shortest Berge path from $u$ to $v$.
 The \textit{ball centered at $u$ and of radius $l$} consists of those vertices of $\cH$ which are at distance at most $l$ from $u$; it will be denoted by $B_l(u)$. We call $D \subset V(\cH)$ a \textit{distance-$l$ dominating set} of $\cH$ if $\bigcup_{u \in D} B_l(u)=V(\cH)$. Equivalently we can say that $D \subset V(\cH)$ is a distance-$l$ dominating set if and only if $D \cap B_l(v) \neq \emptyset$ for all $v \in V(\cH)$. Note that distance-$1$ dominating sets are the usual dominating sets.

The minimum size of a distance-$l$ dominating set in a hypergraph $\cH$ is the distance-$l$ domination number $\gamma_d(\cH,l)$. Let further $n_d(k,\gamma,l)$ denote the minimum number of vertices that a $k$-uniform  hypergraph $\cH$ with no isolated vertices can contain if $\gamma_d(\cH,l)\ge \gamma$ holds. The next proposition shows that $n_d(k,\gamma,l)$ does not depend on $l$ once $l \ge 2$ is supposed.  

\begin{proposition}
\label{thm:disteasy} For any $k, l \ge 2$ and $\gamma \ge 1$ we have $n_d(k,\gamma,l)=k\gamma$, and the unique extremal hypergraph consists of $\gamma$ pairwise disjoint edges.
\end{proposition}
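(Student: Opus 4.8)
The plan is to prove the upper and lower bounds separately; connectivity plays no role here, so both follow from facts about arbitrary $k$-uniform hypergraphs without isolated vertices, and uniqueness drops out of the lower-bound analysis. For the \emph{upper bound} $n_d(k,\gamma,l)\le k\gamma$, I would take the hypergraph $\cH_0$ formed by $\gamma$ pairwise disjoint $k$-edges: it has $k\gamma$ vertices, no isolated vertex, and since a Berge path never leaves a connected component, the ball $B_l(u)$ of any vertex $u$ is contained in the unique edge through $u$; hence any distance-$l$ dominating set must meet all $\gamma$ edges, while one vertex per edge clearly suffices, giving $\gamma_d(\cH_0,l)=\gamma$.

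For the \emph{lower bound} the key observation is that, for every $k$-uniform $\cH$ without isolated vertices and every $l\ge 2$, one has the chain
\[
\gamma_d(\cH,l)\ \le\ \gamma_d(\cH,2)\ \le\ \nu(\cH)\ \le\ \frac{|V(\cH)|}{k},
\]
where $\nu(\cH)$ denotes the maximum number of pairwise disjoint edges of $\cH$. The first inequality holds because $B_2(u)\subseteq B_l(u)$, so every distance-$2$ dominating set is distance-$l$ dominating; the last one holds because $\nu(\cH)$ pairwise disjoint $k$-edges occupy $k\nu(\cH)$ distinct vertices. The middle inequality is the point I would focus on: choosing a maximum family $M$ of pairwise disjoint edges, maximality forces every edge of $\cH$ to meet $\bigcup M$, so fixing one vertex $v_F$ in each $F\in M$ and taking any $x\in V(\cH)$, an edge $E\ni x$, and a vertex $y\in E\cap F$ with $F\in M$, the sequence $v_F,F,y,E,x$ is a Berge path of length at most $2$ (degenerating to length $\le 1$ when some of $v_F,y,x$ coincide); thus $\{v_F:F\in M\}$ is a distance-$2$ dominating set of size $\nu(\cH)$. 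Feeding $\gamma_d(\cH,l)\ge\gamma$ into the chain yields $|V(\cH)|\ge k\gamma$.

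For \emph{uniqueness} I would start from $|V(\cH)|=k\gamma$ with $\gamma_d(\cH,l)\ge\gamma$: then every inequality above is an equality, so $\nu(\cH)=\gamma$ and a maximum disjoint family $\{F_1,\dots,F_\gamma\}$ partitions $V(\cH)$ into $k$-sets that are all edges. The remaining task is to rule out any other edge $E$. Since $|E|=k$, such an $E$ cannot sit inside a single part, so it meets two parts, say $F_1$ in a vertex $a$ and $F_2$ in a vertex $b$; then $a\in F_1\cap E$ together with $E\cap F_2\ne\emptyset$ gives $B_2(a)\supseteq F_1\cup F_2$, so $\{a\}\cup\{v_i\in F_i:3\le i\le\gamma\}$ is a distance-$2$ (hence distance-$l$) dominating set of size $\gamma-1$, contradicting $\gamma_d(\cH,l)\ge\gamma$. (The case $\gamma=1$ is trivial, since a $k$-uniform hypergraph on $k$ vertices with no isolated vertex is a single edge.) Hence $\cH$ consists of exactly $\gamma$ pairwise disjoint edges.

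I expect no serious obstacle: the only steps needing a careful eye are checking that the short Berge paths really have length $\le 2$ in all degenerate configurations, and the final elimination of extra edges in the uniqueness part. It is worth noting that, because $\gamma_d(\cH,l)$ is non-increasing in $l$, the whole argument reduces to $l=2$, which explains why the value $k\gamma$ and its unique extremizer do not depend on $l$ for $l\ge 2$.
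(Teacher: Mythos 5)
Your proof is correct, but it takes a different route from the paper's. The paper proves the lower bound by induction on $\gamma$: picking any vertex $v$, it deletes the ball $B_{l-1}(v)$ (of size at least $k$ when $l\ge 2$) together with all edges meeting it, observes that the remaining hypergraph still has distance-$l$ domination number at least $\gamma-1$, and obtains $|V(\cH)|\ge k+(\gamma-1)k$; uniqueness is extracted from the remark that the inequality is strict as soon as some vertex has degree at least two, forcing all edges of an extremal hypergraph to be pairwise disjoint. You instead reduce everything to $l=2$ via the monotonicity $\gamma_d(\cH,l)\le\gamma_d(\cH,2)$ and prove $\gamma_d(\cH,2)\le\nu(\cH)\le |V(\cH)|/k$ by picking one vertex in each edge of a maximum matching and exhibiting Berge paths of length at most $2$; this is in fact the same matching trick the paper itself uses later for the lower bounds in parts \textbf{(c)} and \textbf{(d)} of \tref{dist_conn}, so your argument unifies the proposition with that later step. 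What each approach buys: the paper's induction is self-contained and localizes the strictness needed for uniqueness in one line, while your chain of inequalities makes the independence from $l$ transparent and yields a cleaner, fully explicit uniqueness argument (equality forces a perfect matching into $\gamma$ disjoint edges, and any additional edge meeting two matching edges would produce a distance-$2$ dominating set of size $\gamma-1$). Your degenerate-path and $\gamma=1$ caveats are handled adequately, so there is no gap; the only point worth stating explicitly is that an extra edge contained in a single matching edge would coincide with it, which you implicitly use when asserting it must meet two parts.
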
    

\begin{proof}
It is clear that the $k$-uniform hypergraph with just $\gamma$ disjoint edges yields the upper bound $n_d(k,\gamma,l)\le k\gamma$.

%To see the upper bound consider a hypergraph $\cH$ consisting of $\gamma$ pairwise disjoint sets of size $k$. Any dominating set $D$ of $\cH$ must contain a vertex from every edge therefore $|D| \ge \gamma$ holds.

We prove the lower bound by induction on $\gamma$. The case $\gamma=1$ is trivial. So assume that $\gamma \ge 2$, and let $\cH=(V(\cH),\cE(\cH))$ be a $k$-uniform hypergraph with $\gamma_d(\cH,l) \ge \gamma$. Consider an arbitrary $v \in V(\cH)$. Any vertex in $N(B_{l-1}(v))$ is distance-$l$ dominated by $v$, therefore the $k$-uniform hypergraph $\cH'$ induced by the edge set $\{H \in \cE(\cH): H\cap B_{l-1}(v)=\emptyset\}$ covers all vertices of $\cH$ not distance-$l$ dominated by $v$. The assumption $\gamma_d(\cH,l) \ge \gamma$ implies $\gamma_d(\cH',l) \ge \gamma-1$ and thus using that $|B_{l-1}(v)|\ge k$ for $l \ge 2$ and by induction we obtain $$|V(\cH)| =|B_{l-1}(v)|+|V(\cH')|\ge k+(\gamma-1)k=\gamma k.$$
Strict inequality holds whenever $v$ has degree at least two.
\end{proof}

The problem becomes more interesting when disconnected hypergraphs get excluded.
Hence, for $k \ge 2$ and $l,\gamma \ge 1$ let $n_{dc}(k,\gamma,l)$ denote the minimum number of vertices that a $k$-uniform $\mathbf{connected}$ hypergraph $\cH$ must contain if it has $\gamma_d(\cH,l)\ge \gamma$. 

\vspace{3mm}

To state our main result concerning $n_{dc}(k,\gamma,l)$ we need to define the following function:
\[ f(k,\gamma,l) :=
  \begin{cases}
    \frac{l}{2}k\gamma +\max\{k,\gamma\}       & \quad \text{if } l \text{ is even,}\\
    \frac{l+1}{2}k\gamma  & \quad \text{if } l \text{ is odd.}\\
  \end{cases}
\]

\vskip 0.4truecm

\begin{theorem}
\label{thm:dist_conn} 
%\textbf{(b)} For any $k\ge 2$ we have $$\frac{5k}{2} \le n_{dc}(k,2,2).$$

\textbf{(a)} For any $k,l\ge 2$ we have $$\frac{(2l+1)k}{2} \le n_{dc}(k,2,l) \le \min\left\{\left\lceil\frac{(2l+1)(k+1)}{2} \right\rceil, (l+1)k \right\}.$$ 

\textbf{(b)} For any $k \ge 2$, $l\ge 4$ and $\gamma\ge 3$ we have $$k\left\lceil \left(\frac{l-1}{2}-1\right)\gamma\right\rceil<n_{dc}(k,\gamma,l) \le f(k,\gamma,l).$$

\textbf{(c)} For any $k \ge 2$ and $\gamma\ge 3$ we have 
$$k\gamma\le n_{dc}(k,\gamma,2)\le k\gamma+\max\{k,\gamma\}.$$

\textbf{(d)} For any $k \ge 2$ and $\gamma\ge 3$ we have 
$$k\gamma\le n_{dc}(k,\gamma,3)\le 2k\gamma.$$
\end{theorem}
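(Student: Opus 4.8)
The plan is to establish the two bounds separately. The lower bound $n_{dc}(k,\gamma,3)\ge k\gamma$ is immediate from Proposition~\ref{thm:disteasy}: a connected $k$-uniform hypergraph with distance-$3$ domination number at least $\gamma$ is in particular a (not necessarily connected) $k$-uniform hypergraph with no isolated vertices and distance-$l$ domination number $\ge\gamma$ for $l=3\ge 2$, so it has at least $k\gamma$ vertices. (In fact one should note that the extremal hypergraph of Proposition~\ref{thm:disteasy} is disconnected, so a priori the bound could be improved; but for part (d) we are content with $k\gamma$ and invest our effort into the construction.)

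For the upper bound $n_{dc}(k,\gamma,3)\le 2k\gamma$ I would exhibit an explicit connected $k$-uniform hypergraph $\cH$ on at most $2k\gamma$ vertices with $\gamma_d(\cH,3)\ge\gamma$. The natural idea is to build a ``path-like'' hypergraph out of $\gamma$ blocks, each block being a single edge of size $k$ together with enough extra structure to (i) keep consecutive blocks at Berge-distance large enough that one center cannot distance-$3$ dominate two non-adjacent blocks, while (ii) still keeping the whole hypergraph connected and $k$-uniform. Concretely I would take $\gamma$ disjoint ``core'' edges $H_1,\dots,H_\gamma$, pick a vertex $x_i\in H_i$, and link $H_i$ to $H_{i+1}$ by a connector edge; the point of the factor $2$ is that we can afford one connector edge of size $k$ per pair of consecutive cores (or, more economically, reuse vertices so that the total is $\le 2k\gamma$). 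One then checks that the balls $B_3(u)$ are ``local'': any $u$ lying in or near block $i$ has $B_3(u)$ disjoint from block $j$ whenever $|i-j|$ is large, so that distance-$3$ dominating all $\gamma$ cores forces at least $\gamma$ centers after grouping the cores appropriately — this is where the length-$3$ (odd) case is genuinely different from the $l=2$ case of part (c), since with odd $l$ a single connector edge does not let a center ``reach across''.

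The bookkeeping has two parts. First, the vertex count: $\gamma$ core edges use $k\gamma$ vertices, and the connectors must be arranged so that the additional vertices number at most $k\gamma$; with $\gamma-1$ connectors this means roughly $k$ new vertices per connector, which is exactly affordable and where the constant $2$ comes from (there is slack, consistent with the fact that the theorem only claims an $O(k\gamma)$ upper bound, not a tight one). Second, and this is the main obstacle, one must verify the distance lower bound: show that $\gamma_d(\cH,3)\ge\gamma$. The clean way is to identify $\gamma$ ``witness'' vertices $w_1,\dots,w_\gamma$ (say, one deep inside each core edge) that are pairwise at Berge-distance $\ge 7$, so that no single vertex lies in $B_3(w_i)\cap B_3(w_j)$ for $i\ne j$; then any distance-$3$ dominating set needs a distinct vertex for each $w_i$. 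Computing Berge-distances in the constructed hypergraph — carefully accounting for the fact that a Berge path may pass through a connector edge in a single step — is the delicate point, and the construction must be chosen so that the distance between the witnesses in adjacent cores is at least $7$ (which, with connectors of ``width'' contributing distance $\ge 3$ between core interiors, is what dictates the size and overlap pattern of the connector edges). Once the construction is pinned down so that this distance bound holds, both inequalities follow, giving $k\gamma\le n_{dc}(k,\gamma,3)\le 2k\gamma$.
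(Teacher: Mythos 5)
Your lower bound is fine: since $\gamma_d(\cH,3)\ge\gamma$ and $\cH$ has no isolated vertices, Proposition~\ref{thm:disteasy} immediately gives $|V(\cH)|\ge n_d(k,\gamma,3)=k\gamma$. (The paper instead argues via a maximal matching, showing it must have at least $\gamma$ edges because picking one vertex per matching edge distance-$2$ dominates $\cH$; both routes are correct and equally short.)

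The upper bound, however, has a genuine gap: the path-of-blocks construction you sketch cannot meet the $2k\gamma$ budget. If consecutive core edges $H_i,H_{i+1}$ are joined by a single connector edge meeting both, then any vertex of $H_i$ reaches any vertex of $H_{i+1}$ by a Berge path of length $3$ (core edge, connector, core edge), so witnesses in adjacent cores are at distance at most $3$, nowhere near the required $7=2l+1$; in fact a single well-placed vertex then distance-$3$ dominates several consecutive blocks, and the sketched hypergraph has distance-$3$ domination number on the order of $\gamma/3$, not $\gamma$. Conversely, forcing consecutive witnesses in a chain to be at Berge-distance at least $7$ requires about seven edges between them, i.e.\ roughly $6k$ fresh vertices per gap and about $7k\gamma$ vertices in total, far exceeding $2k\gamma$. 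The missing idea is the paper's Construction~2 (a spider/star, here with legs of length $\lfloor l/2\rfloor=1$): all $\gamma$ arms are attached to one shared hub $W$ of size $\max\{k,\gamma\}$, partitioned into $W_1,\dots,W_\gamma$, with arm $i$ consisting of an edge $U_{u_i}$ of size $k$ and a tip edge $U_{u_i,1}\cup Z_i$; the hub edges are all $k$-subsets of $W$, and the arm is glued to the hub by the edge $W_i\cup U_{u_i,2}$. The total number of vertices is exactly $2k\gamma$, because the hub cost $|W|$ is amortized over all arms ($|Z_i|=k-|W_i|$). Any two tips are at distance $7$ because every Berge path between them must pass through the hub, and the domination bound follows by pigeonhole: a set $D$ of at most $\gamma-1$ vertices misses an entire arm together with its hub part $W_i$, and then every $z\in Z_i$ is at distance at least $l+1=4$ from $D$. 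This hub-sharing is precisely what a path layout cannot provide, so without replacing your construction by a star-type one the upper bound $n_{dc}(k,\gamma,3)\le 2k\gamma$ does not follow.
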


\vspace{1cm}

The remainder of the paper is organized as follows: we prove \tref{main} in Section~2, and \tref{dist_conn} in Section 3. Section 4 contains some final remarks, also including a general upper bound on $\gamma_{dc}(\cH,l)$ as a function of $l$, the number of vertices, and the edge size.

\section{Proof of \tref{main}}

In this section we prove our bounds on $n_{\times}(k,\gamma,s)$ and $n(k,\gamma,s)$. 
First we verify the bound $k+k^{1-1/(\gamma-s+1)} \le n_{\times}(k,\gamma,s)$. Observe that it is enough to prove the statement for $s=1$, since for any hypergraph $\cH$ we have $\gamma_{\times}(\cH,s)-(s-1) \ge \gamma_{\times}(\cH,1)$ as for any $s$-tuple dominating set $D$ of $\cH$ and a $s'$-subset $D'$ of $D$ the set $D\setminus D'$ $(s-s')$-tuple dominates $\cH$. Consequently $$n_{\times}(k,\gamma,s) \ge n_{\times}(k,\gamma-(s-1),1),$$ which implies the statement.

To see $n(k,\gamma,1) \ge k+k^{1-1/\gamma}$ let $\cH$ be a $k$-uniform hypergraph with $\gamma(\cH)\ge \gamma \ge 2$. Let $G=(V(\cH),E)$ be the graph with $(u,v)\in E$ if and only if no $H\in \cE(\cH)$ contains both $u$ and $v$. The $\gamma \ge 2$ condition means that for any vertex $v\in V(\cH)$ there exists a $u$ such that no edge $H\in \cE(\cH)$ contains both $u$ and $v$, thus $G$ does not contain any isolated vertices. Let us write $n=|V(\cH)|=|V(G)|=k+x$ and let $t$ be the number of edges in a largest matching $M=(V(M), \cE(M))$ of $G$. Note that two distinct vertices $u',v'$ outside $V(M)$ cannot be adjacent to two distinct endpoints $u,v$ of an edge $e\in \cE(M)$ as the matching $(M\setminus \{e\})\cup\{(u,u'),(v,v')\}$ would contradict the maximality of $M$. Then either just one of $u$ and $v$ has neighbors outside $M$, or none of them have any, or they share their unique neighbor outside $M$. We denote by $e(v)$ the (or an)  endpoint of $e$ whose `outside' neighborhood in this sense contains the `outside' neighborhood of the other endpoint, and let $d_{e(v)}$ denote the size of $N_{e(v)} \setminus V(M)$.

By the definition of $\gamma =\gamma_{\times}(\cH,1)=\gamma (\cH, 1)$ and $G$ we have that for any set $\Gamma$ of $\gamma-1$ vertices in $V(G)$ there is a vertex $v \in V(G)$ which is connected by edges in $E(G)$ to all the vertices of $\Gamma$. If $\Gamma$ is a subset of $V(G) \setminus V(M)$, then the vertex which is adjacent to all vertices of $\Gamma$  must be in $V(M)$, since $M$ is maximal. By this we obtain$$\sum_{e \in \cE(M)} \binom{d_{e(v)}}{\gamma-1} \ge \binom{|V(G) \setminus V(M)|}{\gamma-1}.$$

Writing $d:=\max_{e \in \cE(M)}d_{e(v)}$ the above inequality yields $$td^{\gamma -1} \ge (k+x-2t)^{\gamma -1},$$
and rearranging gives $$ d \ge \frac{k+x-2t}{t^{ \frac{1}{\gamma -1}}}.$$
Let $e\in \cE(M)$ be an edge with $d_{e(v)}=d$, and let $H$ be any hyperedge $H \in \cE(\cH)$ containing $e(v)$. Just as any hyperedge, $H$ must avoid an endpoint of each edge in $M$, and $H$ is disjoint from $N_{e(v)} \setminus V(M)$.
Therefore, we obtain $k+x=n \ge d+t+k$ and thus $x\ge d+t$.
Plugging the previous inequality into this and rearranging yields: $$ t^{\frac{1}{\gamma-1}}(x-t+2t^{\frac{\gamma-2}{\gamma-1}}) \ge k+x.$$
Now using that $x \ge t$ and $t \ge t^{\frac{\gamma-2}{\gamma-1}}$, we obtain that the left-hand side of the previous inequality is at most $x^{\frac{\gamma}{\gamma-1}} +x$ and therefore we have $$x^{\frac{\gamma}{\gamma-1}} +x \ge k+x,$$
which proves the required lower bound.

To prove the bound $n(k,\gamma,s) \le k+(4+o(1))k^{1-1/(\gamma-s+1)}$ we need a construction. This involves projective geometries or linear vector spaces over finite fields. We will use the Gaussian or $q$-binomial coefficient ${n \brack k}_q$ that denotes the number of $k$-dimensional subspaces of a vector space of dimension $n$ over $\mathbb{F}_q$, i.e. $${n \brack k}_q:=\frac{\prod_{i=1}^k(q^{n-i+1}-1)}{\prod_{i=1}^k(q^i-1)}$$
and we will omit $q$ from the subscript when it is clear from the context.
Let $q$ be a prime power, $t$ be any positive integer and $U$ be a $\gamma$-dimensional vector space over $\mathbb{F}_q$. 
Let $E_1,E_2,\dots,E_m$ be the 1-dimensional subspaces of $U$ and $U_1,U_2,\dots,U_m$ the $(\gamma-1)$-dimensional subspaces of $U$, where $m={\gamma \brack 1}_q={\gamma \brack \gamma-1}_q=q^{\gamma-1}+q^{\gamma-2}+\dots+1$. 
Let $A_1,A_2,\dots,A_m,B$ be pairwise disjoint sets with $B=\{b_1,b_2,\dots,b_m\}$ and $|A_i|=t$ for all $1\le i \le m$. Let us define $\cH_{q,\gamma,t}=\{H_1,H_2,\dots, H_m\}$ by 
$$H_i:=\{b_i\}\cup \bigcup_{j: E_i \not\le U_j}A_j.$$
We claim that $\gamma(\cH_{q,\gamma-s+1,t},s)\ge \gamma$. 
Suppose not and let $D=D_B \cup D_A$ 
be a minimal $s$-dominating set of $\cH=\cH_{q,\gamma-s+1,t}$ with $D_B=D \cap B$, $D_A=D\setminus D_B$ and $|D|<\gamma$. As every vertex $d\in D_B$ is contained in exactly one hyperedge $H_d$ of $\cH$, each such $d$ can be replaced by a vertex $d'\in V(\cH)\setminus (D\cup B)$ to obtain an $s$-dominating set $D'$ with $D'\subseteq V(\cH)\setminus B$ and $|D'|=|D|<\gamma$. Let $D'=\{d_1,d_2,\dots,d_p\}$ and $D''=\{d_1,d_2,\dots,d_{\gamma-s}\}$. Then for $Z=\bigcap_{j:\exists v \in D''\cap A_j} U_j$ we obtain 
$$
\dim(Z)\ge 1.
$$
If $E$ is a $1$-subspace of $Z$, then the corresponding vertex $b \in V(\cH)$ is not dominated by any vertex $d\in D''$ and thus at most $(s-1)$-dominated by $D'$, which is a contradiction.

Let us consider the other parameters of the above hypergraph: $n=|V(\cH_{q,\gamma-s+1,t})|=m(t+1)$ and $\cH_{q,\gamma-s+1,t}$ is $k_{q,\gamma-s+1,t}$-uniform with $k_{q,\gamma-s+1,t}=1+q^{\gamma-s}t$, therefore if $t=q$, then we obtain $n=q^{\gamma-s+1}+2(q^{\gamma-s}+q^{\gamma-s-1}+\dots +q)+1$ and $k_\gamma=k_{q,\gamma-s+1,q}=1+q^{\gamma-s+1}$, thus we have $n\le k_{\gamma-s+1}+4k_{\gamma-s+1}^{1-1/(\gamma-s+1)}$. This finishes the proof of the upper bound if $k$ is one larger than the $(\gamma-s+1)$st power of a prime.

 Finally, let us consider the general case when $k'=1+q^{\gamma-s+1}+e$ with $e<q'^{\gamma-s+1}-q^{\gamma-s+1}$ where $q'$ is the smallest prime larger than $q$. It is well-known that $q'=q+o(q)$ and thus $e=o(q^{\gamma-s+1})$. Let $C_1,C_2,\dots, C_{q+1}$ be pairwise disjoint sets all of size $\lceil\frac{e}{q-\gamma+s+2}\rceil$, all being disjoint from $V(\cH_{q,\gamma-s+1,q})$. We renumber the subspaces $U_1,U_2,\dots, U_m$ in such a way that $U_1,U_2, \dots, U_{q+1}$ correspond to the dual of a $(q+1)$-arc in $PG(\gamma-s,q)$, i.e.\ every 1-subspace $E$ of $V$ is contained in at most $\gamma-s-1$ subspaces among $U_1,U_2,\dots,U_{q+1}$. (For a general introduction to finite geometries, see \cite{Hirsch}.) Therefore, for any $1\le i \le m$, the sets $I_i:=\{j:E_i \not\le U_j, 1\le j\le q+1\}$ satisfy $|I_i|\ge q-\gamma+s+2$ and thus there exists a set $T_i\subset \bigcup_{j\in I_i}C_j$ of size $e$. Let us define
 $$H'_i:=\{b_i\}\cup \bigcup_{j: E_i \not\le U_j}A_j \cup T_i.$$ By definition we have $|H'_i|=k'$ for all $i=1,2,\dots,m$. The $s$-domination number of the new hypergraph is the same as that of the old one, as for any $v \in C_i$ and $u\in A_i$ we have $N_u \subset N_v$. Moreover the number $n'$ of vertices in the new hypergraph is $$n+\lceil\frac{e}{q-\gamma+s+1}\rceil(q+1)\le k+4k^{1-1/(\gamma-s+1)}+e+O_\gamma(e/q)\le k'+(4+o(1))k'^{1-1/(\gamma-s+1)},$$
as $O_\gamma(e/q)=o(q^{\gamma-s})$ holds by $e=o(q^{\gamma-s+1})$.
\hfill $\qed$

\section{Distance domination}
\label{sec:dist}
In this section we prove \tref{dist_conn}, the lower and upper bounds on $n_{dc}(k,\gamma,l)$. 

%{\komm lehetne esetleg ezt a konkluzioba (akkor a bevezetoben is ugy kell jelezni)}

\subsection{The $j$-radius of trees}
We start with some definitions and an auxiliary statement that we will use in the proof. 

\bigskip

\noindent \textbf{Definition.} 
For positive integers $a_1, a_2,...,a_{h}$ the \textit{spider graph}, denoted by $$S(a_1,a_2,\dots, a_{h}),$$
is the tree
on $1+\sum_{i=1}^ha_i$ vertices which is obtained 
from $h$ paths of lengths
$a_1,a_2,\dots, a_{h}$, respectively, by identifying the first vertices of those paths to a single vertex $v$ of degree $h$.
Hence, $S(a_1,a_2,\dots, a_{h})\setminus \{v\}$ has $h$ connected components, say $C_1,C_2,\dots,C_{h}$, where each $C_i$ is a path $P_{a_i}$ on $a_i$ vertices (for $i=1,2,...,h$).

\vskip 0.3truecm

 In a connected graph $G=(V(G),E(G))$, the \textit{excentricity} of a vertex $v\in V(G)$ is defined as
  $${\rm exc}_G(v):=\max\{d_G(u,v):u\in V(G)\}$$
  and let the \textit{radius} of $G$ be $$r(G):=\min\{{\rm exc}_G(v):v\in V(G)\}.$$ 
More generally, for any $\emptyset \neq W \subset V(G)$ let us write $${\rm exc}_G(W):=\max\{\min\{d_G(u,w): w \in W\} : u \in V(G)\}$$  and for an integer $j \ge 1$ let the \textit{$j$-radius of $G$} be $$r_j(G):=\min\{{\rm exc}_G(W):W\subset V(G), |W|\le j\}.$$ 
Certainly we have $r(G)=r_1(G)$. Finally, let $$r_j(n):=\max\{r_j(T): |V(T)|=n, \ T ~\textnormal{is a tree}\}.$$

\vspace{2mm}

\noindent The numerical bounds themselves in the next lemma concerning the radius of a tree are folklore; for later use, however, we need a more detailed assertion which describes some structural properties, too. Some bounds on the function $r_j(n)$ can be derived also from results of Meir and Moon \cite{MM}, but the following is a little sharper.

\begin{lemma}
\label{lem:genradius} Let $n\ge j$ be positive integers. Then we have
$$\left\lfloor\frac{n}{j+1}\right\rfloor\le r_{j}(n)\le \left\lceil \frac{n}{j+1}\right\rceil.$$ Moreover, $r_1(n)=\left\lceil \frac{n-1}{2}\right\rceil$ and

\vspace{2mm}

\textbf{(i)} if $n$ is even, then the only tree with $r_1(T)=\lceil \frac{n-1}{2}\rceil$ is the path $P_n$ on $n$ vertices.

\vspace{2mm}

\textbf{(ii)} If $n$ is odd and $r_1(T)=\lceil \frac{n-1}{2}\rceil$ holds, then $T$ is a path $P_{n-1}$ with a pendant edge. Furthermore, $T$ contains two copies of $P_{n-1}$ if and only if $T$ is either a path $P_n$ or a fork $F_n$. Otherwise $T$ contains just one copy of $P_{n-1}$.
\end{lemma}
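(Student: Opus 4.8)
The plan is to prove the three parts in sequence, starting from the quantitative bounds and then extracting the structural information needed for parts (i) and (ii). First I would establish the general inequality $\lfloor n/(j+1)\rfloor \le r_j(n) \le \lceil n/(j+1)\rceil$. For the upper bound, given any tree $T$ on $n$ vertices, I would pick a longest path, walk along it and place $j$ "centers" at roughly equal spacing so that every vertex of the path is within $\lceil n/(j+1)\rceil$ of some center; the key observation is that in a tree, every vertex hangs off the longest path at distance at most half its length, and more carefully, a BFS/greedy argument peeling off balls of radius $\lceil n/(j+1)\rceil$ from endpoints of a longest path covers everything. Alternatively, and more cleanly, I would root $T$ and repeatedly remove a deepest vertex together with its ancestors up to height $\lceil n/(j+1)\rceil$, charging $\ge \lceil n/(j+1)\rceil + 1 \ge n/(j+1)+1$... wait — I want a set of size $\le j$, so the right framing is: choose centers greedily so each ball of radius $r:=\lceil n/(j+1)\rceil$ centered at a center covers a subtree, and show $j$ balls suffice by a counting argument on the $n$ vertices. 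For the lower bound $r_j(n)\ge \lfloor n/(j+1)\rfloor$, the extremal example is the path $P_n$ (for $j=1$) or more generally a path, where any $j$ chosen vertices split $P_n$ into $\le j+1$ subpaths, one of which has $\ge \lceil n/(j+1)\rceil$ vertices, forcing excentricity $\ge \lfloor (\text{that many})/... \rfloor \ge \lfloor n/(j+1)\rfloor$; I would just verify the arithmetic that the largest gap in a path of $n$ vertices cut at $j$ points has eccentricity at least $\lfloor n/(j+1)\rfloor$.

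For the refined statement $r_1(n) = \lceil (n-1)/2\rceil$, I would note this is just the case $j=1$ stated with the sharper constant: the radius of $P_n$ is $\lceil (n-1)/2\rceil$, giving the lower bound, and for the upper bound I would show every tree $T$ on $n$ vertices has a vertex (its center) of eccentricity $\le \lceil (n-1)/2\rceil$. The clean way: the center of a tree is the midpoint of a diametral (longest) path, and the diameter $\mathrm{diam}(T)$ satisfies $r_1(T) = \lceil \mathrm{diam}(T)/2 \rceil$ (standard fact for trees), while $\mathrm{diam}(T) \le n-1$ with equality iff $T = P_n$. So I would prove/cite the identity $r(T) = \lceil \mathrm{diam}(T)/2\rceil$ for trees and then reduce everything to understanding the diameter.

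For parts (i) and (ii), the structural analysis, the driving principle is that $r_1(T) = \lceil (n-1)/2\rceil$ forces $\mathrm{diam}(T)$ to be either $n-1$ (if $n$ is even, since $\lceil (n-1)/2\rceil = n/2$ needs $\mathrm{diam}\ge n-1$) or $n-1$ or $n-2$ (if $n$ is odd, since $\lceil (n-1)/2\rceil = (n-1)/2$ needs $\mathrm{diam} \in \{n-2, n-1\}$). In case (i), $n$ even: $\mathrm{diam}(T) = n-1$ means $T$ contains a path on all $n$ vertices, hence $T = P_n$. In case (ii), $n$ odd: if $\mathrm{diam}(T) = n-1$ then again $T = P_n$; if $\mathrm{diam}(T) = n-2$ then $T$ contains a path $P_{n-1}$ on $n-1$ vertices plus one extra vertex attached by a pendant edge somewhere along this path — but I must then check the pendant vertex does not create a longer path, which restricts where it can attach: it can attach at any internal vertex of $P_{n-1}$ (attaching at an endpoint would give $P_n$). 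So $T$ is $P_{n-1}$ with a pendant edge at some vertex, as claimed. For the "two copies of $P_{n-1}$" dichotomy, I would argue: $T$ contains a second distinct longest path $P_{n-1}$ iff the pendant vertex $w$, attached at vertex $x$ of the spine $v_1\cdots v_{n-1}$, can replace one of the spine's endpoints — this happens iff $x$ is adjacent to an endpoint (giving the fork $F_n$, i.e., $S(1,1,n-2)$-type, a path with a pendant at the second vertex) or $x$ is the endpoint itself (giving $P_n$); I would need to define the "fork" $F_n$ consistently (presumably $F_n = S(1, 2, n-3)$ or the path $P_{n-1}$ with pendant at the second-from-last vertex — whichever yields exactly two copies) and rule out all other attachment points by checking distances. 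The main obstacle will be this last case analysis in (ii): carefully enumerating which pendant-attachment positions yield exactly one versus exactly two longest paths, and pinning down the exact definition of the fork $F_n$ so that the "if and only if" is tight — this requires a somewhat delicate but elementary argument about distances along a path, and getting the boundary cases (pendant near an end of the spine) exactly right is where errors are most likely to creep in.
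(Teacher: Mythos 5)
Your treatment of $r_1(n)$ and of parts (i) and (ii) is essentially sound and close to the paper's argument (the paper takes a middle vertex of a longest path, which is the same as your radius $=\lceil\mathrm{diam}/2\rceil$ reduction; your enumeration of pendant-attachment positions for the ``two copies of $P_{n-1}$'' dichotomy is the right way to pin down $F_n$). The genuine gap is in your lower bound for general $j$: the path $P_n$ is \emph{not} an extremal example for $j\ge 2$, and the step ``one of the $\le j+1$ subpaths has $\ge \lceil n/(j+1)\rceil$ vertices, forcing excentricity $\ge \lfloor n/(j+1)\rfloor$'' confuses the length of a gap with the distance to the nearest chosen vertex. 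A vertex in the interior of a gap flanked by two chosen vertices is within roughly \emph{half} the gap's length of the nearer one; only the two end segments force excentricity equal to their full length. Consequently a center on a path covers an interval of $2r+1$ vertices, so $r_j(P_n)=\lceil (n-j)/(2j)\rceil$, which is strictly smaller than $\lfloor n/(j+1)\rfloor$ once $j\ge 2$ (e.g.\ $r_2(P_{12})=3<4=\lfloor 12/3\rfloor$). To certify $r_j(n)\ge\lfloor n/(j+1)\rfloor$ you need a different tree: the paper uses the spider with $j+1$ legs of nearly equal length $\approx n/(j+1)$; any $j$-set of vertices misses an entire leg (two legs if it contains the center), and the leaf of a missed leg is at distance at least $\lfloor n/(j+1)\rfloor$ from every chosen vertex because every path to it passes through the center.

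A secondary, smaller issue is the upper bound. Your first sketch (equally spaced centers on a longest path) does not obviously control vertices hanging off the path, and your fallback counting -- each ball of radius $r=\lceil n/(j+1)\rceil$ newly covers at least $r+1$ vertices, so $j$ balls cover $j(r+1)\ge n$ -- only works when $j(j+1)\gtrsim n$. The accounting that actually closes this is the paper's: show (via a cut-vertex claim applied to a longest path) that one can always find a vertex $v$ such that the components of $T\setminus\{v\}$ lying entirely within distance $m$ of $v$ contain at least $m$ vertices; peel such a piece off $j-1$ times with $m\approx n/(j+1)$, and then cover the remaining tree, which has at most about $2n/(j+1)$ vertices, by a single $1$-center of radius $\lceil(|V|-1)/2\rceil\le\lceil n/(j+1)\rceil$. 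The point is that the last ball buys a factor of $2$ (radius versus diameter), which is exactly what your uniform ``$r+1$ per ball'' count misses.
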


\begin{proof}
Let us first prove the statements about $r_1(n)$. Let $T$ be an arbitrary tree on $n$ vertices and let $v$ be a middle vertex of a longest path $P$ in $T$. If $P$ contains $l$ vertices, then any vertex is at distance at most $\lceil \frac{l-1}{2}\rceil$ from $v$. This implies all assertions of the lemma if $n$ is even. If $n$ is odd, this implies that $T$ must contain a path on $n-1$ vertices and thus $T$ is a path $P_{n-1}$ and a pendant edge.

Let us now prove the general lower bound. We claim that $$\left\lfloor \frac{n}{j+1}\right\rfloor = r_{j}\left(S\left(\left\lfloor \frac{n-1}{j+1}\right\rfloor,\left\lfloor \frac{n}{j+1}\right\rfloor,\dots,\left\lfloor \frac{n+j-1}{j+1}\right\rfloor\right)\right)$$ holds, which proves the lower bound by the definition of $r_j(n)$. To see that the claim is true, observe that any set $U\subset V(S(\lfloor \frac{n-1}{j+1}\rfloor,\lfloor \frac{n}{j+1}\rfloor,\dots,\lfloor \frac{n+j-1}{j+1}\rfloor))$ of size $j$ is disjoint from at least one component $C$ of $S(\lfloor \frac{n-1}{j+1}\rfloor,\lfloor \frac{n}{j+1}\rfloor,\dots,\lfloor \frac{n+j-1}{\gamma+1}\rfloor) \setminus \{v\}$. 

Thus if $v \notin U$, then the leaf of $S(\lfloor \frac{n-1}{j+1}\rfloor,\lfloor \frac{n}{j+1}\rfloor,\dots,\lfloor \frac{n+j-1}{j+1}\rfloor)$ belonging to $C$ has distance at least $$1+\left\lfloor\frac{n-1}{j+1}\right\rfloor\ge \left\lfloor\frac{n}{j+1}\right\rfloor$$ from any vertex of $U$. 

If $v\in U$ holds, then $U$ is disjoint from at least two components $C_1,C_2$ of \\ $S(\lfloor \frac{n-1}{j+1}\rfloor,\lfloor \frac{n}{j+1}\rfloor,\dots,\lfloor \frac{n+j-1}{j+1}\rfloor) \setminus \{v\}$, and the leaf of $S(\lfloor \frac{n-1}{j+1}\rfloor,\lfloor \frac{n}{j+1}\rfloor,\dots,\lfloor \frac{n+j-1}{j+1}\rfloor)$ belonging to the larger path has distance at least $\lfloor\frac{n}{j+1}\rfloor$ from $v$ and thus from $U$. This completes the proof of the general lower bound.

\vspace{3mm}

To see the general upper bound, let $T$ be any tree on $n$ vertices. We will use the following claim repeatedly.

\begin{claim}
\label{clm:cut} Let $m < n$ be two positive integers. Then in any tree $T$ on $n$ vertices there exists a vertex $v$ such that if $C_1,C_2,\dots, C_s$ denote those components of $T\setminus \{v\}$ whose all vertices are at distance at most $m$ from $v$, then $\sum_{i=1}^s|C_i|\ge m$ holds.
\end{claim}

\begin{proof}[Proof of Claim \ref{clm:cut}] Let $P$ be a longest path of $T$. If $P$ contains at most $m$ vertices, then any vertex can play the role of $v$. If $P$ contains at least $m+1$ vertices, then let $v$ be the $(m+1)$st vertex from one end of $P$.
\end{proof}
For $t=1,2,\dots,j-1$ let $m_t=\lfloor \frac{n+t-1}{j+1}\rfloor$ and let $T_1=T$. We apply \clref{cut} to $T_t$ and $m_t$ for $t=1,2,\dots, j-1$ to obtain $v_t$; and then set $$T_{t+1}:=T_j\setminus \cup_{i=1}^{k_t}C_{i,t},$$ where the $C_{i,t}$ ($i=1,2,...,k_t$) are the components of $T_t\setminus \{v_t\}$ whose vertices are at distance at most $m_t$ from $v_t$. By the claim we also have $\sum_{i=1}^{k_t}|C_{i,t}|\ge m_t$. 

In this way we obtain a tree $T_j$ of at most $\lceil 2\frac{n}{j+1}\rceil$ vertices. Let $v_j$ be a vertex of $T_j$ within distance $\lceil \frac{|V(T_j)|-1}{2}\rceil$ from all vertices of $T_j$. Such a vertex exists by the result on $r_1(n)$. Clearly, $U=\{v_1,v_2,\dots,v_j\}$ is a set of vertices with ${\rm exc}_T(U)\le \lceil \frac{n}{j+1}\rceil$, which proves $r_j(T)\le \lceil \frac{n}{j+1}\rceil$.
\end{proof}

\subsection{Putting things together: the proof of \tref{dist_conn}}

Let us first prove the upper bounds of \tref{dist_conn}. To do so we introduce two types of hypergraphs with distance-$l$ domination number $\gamma$. The second construction will prove the upper bounds of \textbf{(b)}, \textbf{(c)}, and \textbf{(d)}. If $\gamma=2$, then the construction giving the smaller number of vertices depends on the values of $k$ and $l$. This is why we have the minimum of two expressions in the upper bound of \textbf{(a)}.

\vspace{2mm}

{\bf Construction 1:}

\vspace{2mm}

For $i=1,\dots, 2l(\gamma-1)+1$ let $U_i$ be pairwise disjoint sets,  and let $v_i$ and $w$ be distinct vertices which are not elements of $\bigcup_{i=1}^{2l(\gamma-1)+1}U_i$. During Construction 1 all the indices will be taken modulo $2l(\gamma-1)+1$, e.g.\ we then have $2l(\gamma-1)+2=1$.

If $k$ is odd, let $|U_i|=\frac{k-1}{2}$ for all $i$. We define a hypergraph $\cH=(V(\cH),\cE(\cH))$ in the following way. Let $$V(\cH):=\bigcup_{i=1}^{2l(\gamma-1)+1}(U_i \cup \{v_i\}),$$ 
 and let the hyperedges of $\cH$ be $$H_i:=U_i\cup U_{i+1}\cup \{v_i\}$$ for $i=1,\dots, 2l(\gamma-1)+1$. Then the size of $V(\cH)$ is $$\frac{(2l(\gamma-1)+1)(k+1)}{2}.$$
 
If $k$ is even, let $|U_{2i}|=\frac{k}{2}$ for  $i=1,\dots, l(\gamma -1)$ and $|U_{2i+1}|=\frac{k}{2}-1$ for  $i=0,\dots, l(\gamma -1)$. We define $\cH$ with the vertex set $$V(\cH):=\{w\} \cup \bigcup_{i=1}^{2l(\gamma-1)+1}(U_i \cup \{v_i\}),$$ and with the edge set $\cE(\cH):=\{H_i \mid 1 \le i \le 2l(\gamma-1)+1\}$, where $$H_i:=U_i\cup U_{i+1}\cup \{v_i\}$$ if $i=1,\dots, 2l(\gamma-1)$, and 
$$H_i:=U_i\cup U_{i+1}\cup \{v_i, w\}$$ if $i=2l(\gamma-1)+1$. Then,  $$|V(\cH)|= \frac{(2l(\gamma-1)+1)(k+1)}{2}+\frac{1}{2}=\left \lceil \frac{(2l(\gamma-1)+1)(k+1)}{2}\right\rceil.$$

To see that $\gamma_d(\cH,l)\ge \gamma$ holds in both cases, observe the following facts:
\begin{itemize}
\item 
vertex $v_i$ distance-$l$ dominates a vertex $v_j$ exactly for $$j \in \{i-l+1,...,i+l-1 \},$$
\item 
vertex $w$ distance-$l$ dominates a $v_j$ exactly for $$j \in \{2l(\gamma-1)-l+2,...,2l(\gamma-1)+l \},$$
\item
a vertex $u\in U_i$ distance-$l$ dominates a $v_j$ exactly for $$j \in \{i-l,..., i+l-1 \}.$$
\end{itemize}
So, every vertex in $V(\cH)$ distance-$l$ dominates at most $2l$ vertices $v_i$. %(including itself).
This yields $\gamma_d(\cH,l) \ge \gamma$. 

\vspace{4mm}

{\bf Construction 2:}

\vspace{2mm}

This construction relies on the spider graph $S=S(a_1,a_2,\dots, a_{\gamma})$ with all of the $a_i$ being equal to $\lfloor l/2\rfloor$. Let $v$ be the only vertex of $S$ with degree $\gamma$. Let $u_1,u_2,\dots,u_\gamma$ be the neighbors of $v$ in $S$, and let $u_1',u'_2,\dots,u'_\gamma$ be the vertices of $S$ that are at distance $\lfloor l/2 \rfloor$ from $v$. 

Let $W$ be a set of size $\max\{k,\gamma\}$. Take a partition $(W_1,W_2,\dots,W_\gamma)$ of $W$ such that $|W_i|=\lfloor \frac{|W|+i-1}{\gamma}\rfloor$. Finally, for every $u \in V(S)\setminus \{v\}$, let $U_u:=U_{u,1}\hskip 0.1truecm \dot{\cup} \hskip 0.1truecm U_{u,2}$ be a set of size $k$ such that
\begin{itemize}
\item
$u \in U_{u,1}$ holds for all $u \in V(S)\setminus \{v\}$,
\item
$U_u\cap U_{u'}=\emptyset$ holds for all $u\neq u' \in V(S)\setminus \{v\}$,
\item
$U_u\cap W=\emptyset$ holds for all $u \in V(S)\setminus \{v\}$,
\item
$|U_{u,1}|=|W_i|$ for all those $u\in V(S)\setminus \{v\}$ which lie in the same component of $S\setminus \{v\}$ as $u_i$.
\end{itemize}
With the help of the previously defined sets we construct a $k$-uniform hypergraph $\cH$ in the following way, depending on the parity of $l$:

\begin{figure}
\begin{center}
\includegraphics[width=12cm]{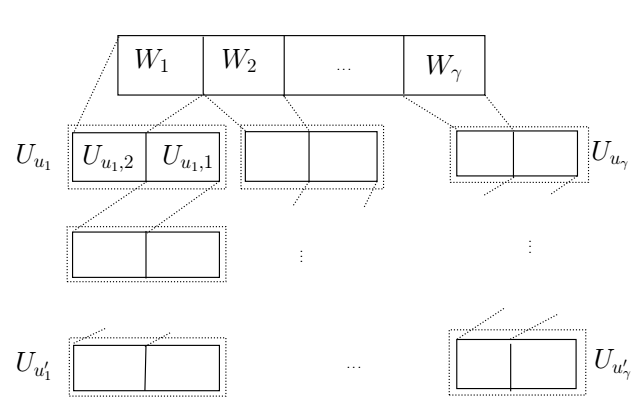}
\caption{Construction 2 in case of even $l$}
\end{center}
\end{figure}

\vskip 0.3truecm

\textsc{Case I}: $l$ is even

\vskip 0.2truecm

Let the vertex set of $\cH$ be $V(\cH)=W\cup \bigcup_{u\in V(S)\setminus \{v\} }U_u$. Thus we have $$|V(\cH)|=\frac{kl\gamma}{2}+\max\{k,\gamma\}.$$ The edge set $\cE(\cH)$ contains the following four types of hyperedges:
\begin{enumerate}
\item
all $k$-subsets of $W$, i.e. $\binom{W}{k}\subset \cE(\cH)$,
\item
for all $u \in V(S)\setminus \{v\}$, we have $U_u\in \cE(\cH)$,
\item
for all $i=1,2,\dots, \gamma$ let $W_i\cup U_{u_i,2} \in \cE(\cH)$,
\item
for every edge $(u,u')=e\in E(S)$ with $u,u'\neq v$ if $d_S(u,v)<d_S(u',v)$ holds, then let $U_{u,1}\cup U_{u',2} \in \cE(\cH)$.
\end{enumerate}

Clearly, $\cH$ is connected due to $\binom{W}{k}\subset \cE(\cH)$. We claim that $\gamma_d(\cH,l)\ge \gamma$ holds. Indeed, if $D \subset V(\cH)$ has size at most $\gamma-1$, then there exists an $i\le \gamma$ such that $$D\cap (W_i \cup \bigcup_{u  \in C_i}U_u)=\emptyset$$ holds where $C_i$ is the component of $S\setminus \{v\}$ containing $u_i$. Then $u'_i$ is at distance at least $1+2\frac{l}{2}=l+1$ from any vertex of $D$ and thus $u'_i$ is not distance $l$-dominated by $D$.

\vskip 0.4truecm

\textsc{Case II}: $l$ is odd

\vskip 0.2truecm

In addition to the sets defined above, let $Z_1,Z_2,\dots,Z_\gamma$ be pairwise disjoint sets of size $k-|W_i|$, each of which is disjoint from all previously defined sets. Let the vertex set of $\cH$ be $$V(\cH)=W\cup \bigcup_{u\in V(S)\setminus \{v\}}U_u\cup \bigcup_{i=1}^\gamma Z_i .$$ Thus we have $$|V(\cH)|\le\lceil\frac{l}{2}\rceil k\gamma.$$ As for the edge set of $\cH$, there is a fifth type of hyperedge:

\vskip 0.2truecm

5. for all $1\le i \le \gamma$ let $U_{u'_i,1}\cup Z_i \in \cE(\cH)$.

\vskip 0.2truecm

\noindent The fact that $\gamma_d(\cH,l)\ge \gamma$ follows similarly as in the previous case, because for any $(\gamma-1)$-set $D\subset V(\cH)$ there exists an $i$ such that any vertex $z \in Z_i$ is at distance at least $l+1$ from $D$.

\vspace{4mm}

Let us now turn our attention to the lower bounds. We prove first that of \textbf{(a)}. Consider a connected $k$-uniform hypergraph $\cH$ with $\gamma_d(\cH,l)\ge 2$. Let $\cM$ be a maximal matching in $\cH$ obtained in the following way. Let $$\cM_1:=\{H_1\}, \ \cI_1:=\{H\in \cE(\cH)\setminus \{H_1\}:H\cap H_1\neq \emptyset\} \textrm{ and } \cR_1:=\cE(\cH)\setminus (\cM_1\cup \cI_1).$$ Then for $s\ge 2$ we define a sequence $\cM_s,\cI_s,\cR_s$ of partitions of $\cE(\cH)$ such that: 

\vspace{2mm}

1. $\cM_s$ is a matching, 

2. every hyperedge in $\cI_s$ meets at least one hyperedge in $\cM_s$, and 

3. all hyperedges in $\cR_s$ are disjoint from all hyperedges in $\cM_s$. 

\vspace{2mm}

If $\cM_s,\cI_s,\cR_s$ are defined with $\cR_s\neq \emptyset$, then let $H_{s+1}\in \cR_s$ be a hyperedge such that $H_{s+1}\cap I_s\neq \emptyset$ for some $I_s\in\cI_s$. The existence of such $H_{s+1}$ follows from the assumption that $\cH$ is connected.  Set $$\cM_{s+1}:=\cM_s\cup \{H_{s+1}\}, \ \cI_{s+1}:=\cI_s \cup \{R\in \cR_s\setminus \{H_{s+1}\}:R\cap H_{s+1}\neq \emptyset\}$$ and $$\cR_{s+1}:=\cE(\cH)\setminus (\cM_{s+1}\cup \cI_{s+1}).$$ For the smallest positive $t$ with $\cR_t=\emptyset$, we let $\cM:=\cM_t$. Thus the size of $\cM$ is $t$.

Now let us consider the auxiliary graph $G_{\cM}$ with vertex set $\cM$ and $e=\{H_i,H_j\}\in E(G_{\cM})$ if and only if there exists $H \in \cH$ with $H\cap H_i\neq \emptyset$ and $H\cap H_j\neq \emptyset$. By the definition of $\cM$, the graph $G_{\cM}$ is connected. For a vertex $v\in \bigcup_{H\in \cM}H$ let $H_v$ denote the only element of $\cM$ containing $v$.

Suppose that for a pair $H,H'\in\cM$ we have $d_{G_{\cM}}(H,H')=r$. Then for any pair of vertices $u \in H, v\in H'$ we have $d_{\cH}(u,v) \le 1+2r$. To see this, consider the sequence $H,H_{e_1},H_{i_1},H_{e_2},H_{i_2},\dots,H_{e_r},H'$, where $e_s$ is the $s$th edge in a shortest path from $H$ to $H'$ and $H_{i_s}$ is the $s$th vertex (i.e.\ a hyperedge in $\cH$) in the same path. By the maximality of $\cM$, for every vertex $w$ of $\cH$ there exists an edge $H_w$ containing $w$ and an edge $H \in \cM$ with $H_w \cap H\neq \emptyset$, therefore by the observation above we have $$d_{\cH}(u,w) \le 2+2r_{G_{\cM}}(u)$$
for every $u \in \bigcup_{H\in\cM}H$ and $w\in V(\cH)$. 

\vspace{2mm}

If $t\ge l+1$ holds, then $|V(\cH)|\ge kt\ge  k(l+1)$, proving the desired lower bound.

\vspace{1mm}

Now suppose that $t\le l-2$ or $t=l-1$ with $t$ being odd. As we have noted, $G_{\cM}$ is connected and thus by \lref{genradius} we obtain $$r(G_{\cM})\le \left\lceil\frac{t-1}{2}\right\rceil.$$ Therefore, there exists an $H^*\in \cM=V(G_{\cM})$ such that $r_{G_{\cM}}(H^*)\le \lceil \frac{t-1}{2}\rceil$ holds and so, by the above, for a vertex $v \in H^*$ we have $$d_{\cH}(v,v') \le 2+2\left\lceil \frac{t-1}{2}\right\rceil$$ for any vertex $v'\in V(\cH)$. So in this case a vertex $v\in H^*$ distance-$l$ dominates $\cH$, contradicting $\gamma_d(\cH,l)\ge 2$.

\vspace{1mm}

If $t=l-1$ and $t$ is even, then let $T$ be a spanning tree of $G_{\cM}$. By \lref{genradius} we obtain that $T$ is a path on $t$ vertices. So we may assume that $$E(G_{\cM})\supset \{(H_i,H_{i+1}):i=1,\dots,t-1\}.$$ Let $e=(H_{t/2},H_{t/2+1})$ and consider a vertex $v\in H_{t/2}\cap H_e$. As for vertices $v'$ with $H_{v'} \cap H_i\neq \emptyset$ for some $i>t/2$, a shortest path in $\cH$ between $v$ and $v'$ need not contain $H_{t/2}$. Thus we obtain that $v$ distance-$l$ dominates $\cH$, contradicting $\gamma_d(\cH,l)\ge 2$.

\vspace{2mm}

Finally, it remains to prove the lower bound of $\textbf{(a)}$ in case of $t=l$ and thus it is enough to prove that $|V(\cH)\setminus \bigcup_{H\in\cM}H|\ge k/2$ holds. We may and will assume that the radius of $G_{\cM}$ is $\lceil\frac{l-1}{2}\rceil$. Let $T$ be a spanning tree of $G_{\cM}$. By \lref{genradius} we know that $T$ is a path if $l$ is even, and $T$ contains a path on $l-1$ vertices if $l$ is odd. We claim that even if $l$ is odd, $T$ must be a path on $t$ vertices. Indeed, otherwise any vertex $v \in H_e$ distance-$l$ dominates $\cH$ where $e$ is the middle edge of a path on $l-1$ vertices that is contained in $T$. This would contradict $\gamma_d(\cH,l)\ge 2$. By this we may assume that $E(G_{\cM})\supset \{(H_i,H_{i+1}):i=1,\dots,l-1\}$.

\vspace{2mm}

\begin{claim}
\label{claim:disjoint}
We have the following:

\vspace{2mm}

\textbf{(i)} For any pair of edges $e,e'$ in $T$ we have $H_e\cap H_{e'}=\emptyset$.

\vspace{1mm}

\textbf{(ii)} There exist $w,w' \in V(\cH)\setminus \bigcup_{H \in \cM}H$ and $H_{w}, H_{w'} \in \cE(\cH)$ with $$w \in H_w \textrm{ and} \ w'\in H_{w'},$$ such that $H_{w}$ meets only $H_1$ and $H_{w'}$ meets only $H_l$, moreover $H_{w}$ and $H_{w'}$ are disjoint from all the other $H \in \cM$ and also from $H_e$ for all $e\in E(T)$.
\end{claim}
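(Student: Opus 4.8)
The plan is to deduce both parts from the ``shortcut principle'' already used above: if a vertex $y\in V(\cH)$ lies within bounded Berge-distance of vertices of several matching edges, then the estimate $d_{\cH}(y,w)\le c+2\,d_{G_{\cM}}(H,H')$, valid for every $w\in V(\cH)$ (with $H$ a matching edge near $y$ and $H'$ a matching edge met by some hyperedge through $w$), forces $y$ to distance-$l$ dominate $\cH$ as soon as those matching edges are sufficiently spread out along $T$; since $\gamma_d(\cH,l)\ge2$, this is impossible. A preliminary observation I would record is that the assumption $r(G_{\cM})=\lceil\frac{l-1}{2}\rceil$ forbids ``long chords'': an edge of $G_{\cM}$ between two vertices of $T$ at path-distance at least $3$ makes $T$ behave like a path on at most $l-2$ vertices, lowering the radius; in particular, for even $l$ the graph $G_{\cM}$ is exactly the path $H_1-\cdots-H_l$.

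For part (i), suppose $e=(H_i,H_{i+1})$ and $e'=(H_j,H_{j+1})$ are edges of $T$ with $i<j$, and pick $x\in H_e\cap H_{e'}$. Following any vertex of $H_i$ into $H_e$, then to $x$, then along $H_{e'}$ into $H_{j+1}$ shows that every vertex of $H_i$ is within Berge-distance $3$ of every vertex of $H_{j+1}$; so $x$ realizes the edge $\{H_i,H_{j+1}\}$ of $G_{\cM}$ in the sense of the ``$1+2r$'' estimate. Since $j\ge i+1$, this chord has path-length at least $2$; if $j\ge i+2$ the preliminary observation already produces a centre of eccentricity at most $\lceil\frac{l-1}{2}\rceil-1$ in $G_{\cM}$ together with this chord, hence a vertex distance-$l$ dominating $\cH$, a contradiction. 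When $j=i+1$ the chord has path-length exactly $2$; for even $l$ this still lowers the radius and we finish as before, while for odd $l$ I would instead use that $x$ is within Berge-distance $2$ of all of $H_i,H_{i+1},H_{i+2}$ and push the estimate directly from $x$, combining if necessary with a second such configuration, to exhibit the dominating vertex.

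For part (ii), let $v$ be a vertex of the off-centre median matching edge $H^{*}$ of $T$; for even $l$ take $H^{*}=H_{l/2+1}$, so that $d_{G_{\cM}}(H^{*},H_1)=l/2$ while $d_{G_{\cM}}(H^{*},H_p)\le l/2-1$ for $p\ge2$. As $v$ does not distance-$l$ dominate $\cH$, there is $w$ with $d_{\cH}(v,w)>l$; if $H_w$ is a hyperedge through $w$ meeting a matching edge $H_p$ (such $H_p$ exists by maximality of $\cM$), then $d_{\cH}(v,w)\le 2+2\,d_{G_{\cM}}(H^{*},H_p)$ forces $p=1$, and $d_{\cH}(v,w)\le 1+\bigl(1+2\,d_{G_{\cM}}(H^{*},H_2)\bigr)\le l$ whenever $H_w$ also meets $H_2$, so $H_w$ meets $H_1$ only (and no other matching edge, since $G_{\cM}$ has no long chords). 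If $w\in\bigcup_{H\in\cM}H$ it must lie in $H_1$; since then $H_w\ne H_1$ (because $H_1$ meets the path-edge witness $H_{e_1}$, which $H_w$ has to avoid) and $H_w$ meets no matching edge besides $H_1$, any vertex of $H_w\setminus H_1$ lies outside $\bigcup_{H\in\cM}H$ and can play the role of $w$. Disjointness of $H_w$ from every $H_e$ ($e\in E(T)$) follows from the same principle: meeting $H_{e_1}$ would put $w$ within Berge-distance $2$ of $H_2$, contradicting $d_{\cH}(v,w)>l$, and meeting any other $H_e$ is impossible because that $H_e$ only meets matching edges far from $H_1$ and hence from $v$. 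The symmetric argument with $H^{*}=H_{l/2}$ and endpoint $H_l$ gives $w'$.

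The conceptual content is light; the real difficulty is the distance bookkeeping for odd $l$. Then $G_{\cM}$ need not be a pure path (a short chord incident to an endpoint is compatible with $r(G_{\cM})=\lceil\frac{l-1}{2}\rceil$), and the path $P_l$ on an odd number of vertices is ``balanced'', with no median matching edge strictly closer to one endpoint than to all other matching edges; so in part (i) a chord skipping a single vertex need not lower the radius, and in part (ii) the far witness one extracts need not attach to $H_1$ itself. Both cases require an extra routing argument exploiting the length-$2$ reach of $x$ (respectively of $H_{e_1}$), and this is where I would expect the bulk of the work to lie.
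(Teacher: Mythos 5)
Your overall strategy---turn any violation of (i) or (ii) into a shortcut in $\cH$ and then exhibit a single vertex that distance-$l$ dominates, contradicting $\gamma_d(\cH,l)\ge 2$---is exactly the mechanism of the paper's proof. But two of your concrete reductions have genuine gaps. First, the ``preliminary observation'' is false in the extreme case: adding the chord $\{H_1,H_l\}$ to the path $H_1-\cdots-H_l$ yields a graph containing the cycle on $l$ vertices, whose radius is still $\lceil\frac{l-1}{2}\rceil$, so there is no radius drop and no contradiction. In part (i) this is precisely the situation $H_{e_1}\cap H_{e_{l-1}}\ne\emptyset$, which falls under your ``$j\ge i+2$'' branch and is left unproved; the paper handles it inside the subcase $i<l/2<j$ by routing from a vertex of $H_{l/2-1}\cap H_{e_{l/2-1}}$ (resp.\ $H_{l/2+2}\cap H_{e_{l/2+1}}$) directly through the two intersecting connector hyperedges. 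Second, treating a shared vertex of $H_e$ and $H_{e'}$ as a chord of $G_{\cM}$ is off by one: between arbitrary vertices of $H_i$ and $H_{j+1}$ the detour $H_i,H_e,H_{e'},H_{j+1}$ costs $4$ hyperedges, not the $3$ a genuine edge of $G_{\cM}$ would give (your ``distance $3$'' presumes a start in $H_i\cap H_e$), and all margins in these estimates are $0$ or $1$. The paper absorbs this loss by always placing the candidate dominating vertex in the intersection of a matching edge with a connector hyperedge (e.g.\ $v\in H_{e_{l/2}}\cap H_{l/2+1}$); with your generic choice of $v$ in the median matching edge the bookkeeping does not close. A concrete casualty is your part (ii): from a generic $v\in H_{l/2+1}$, a hyperedge $H_w$ through a far vertex $w$ that meets $H_{e_1}$ only yields $d_{\cH}(v,w)\le l+1$, so you cannot conclude that $H_w$ avoids $H_{e_1}$---yet disjointness from every $H_e$, $e\in E(T)$, is part of the statement (and is used in the counting that follows the Claim), and your replacement step ($H_w\ne H_1$ when $w\in H_1$) also rests on it.

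Beyond this, you explicitly defer the odd-$l$ case and the ``extra routing'' as the bulk of the work; that is indeed where the effort lies, and the paper resolves it by an explicit subcase analysis which, for each possible position of the violating intersection (or of the offending hyperedges $H_w$), selects a specific vertex of the form $H_a\cap H_{e_b}$ and verifies all distances are at most $l$. So as it stands the proposal captures the right idea but does not yet constitute a proof of either part.
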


\begin{proof}[Proof of Claim] We have two cases depending on the parity of $l$.

\vspace{2mm}

\textsc{Case I}: $l$ is even.

\vskip 0.2truecm

Now we prove \textbf{(i)} in this case. Suppose that $H_{e_i}\cap H_{e_j}\neq \emptyset$ with $e_i=(H_i,H_{i+1}), e_j=(H_j,H_{j+1})$. If $i<j\le l/2$, then a vertex $v \in H_{e_{l/2}}\cap H_{l/2+1}$ distance-$l$ dominates $\cH$, contradicting $\gamma_{dist}(\cH,l)\ge 2$.. Similarly, if $i<j$ and $j\ge l/2$, then a vertex $v \in H_{e_{l/2}}\cap H_{l/2}$ distance-$l$ dominates $\cH$, contradicting $\gamma_{dist}(\cH,l)\ge 2$. Also, if $i<l/2<j$, then if $l/2-i\le j-l/2$, then a vertex $v$ from $H_{l/2-1}\cap H_{e_{l/2-1}}$ distance-$l$ dominates $\cH$, while if $l/2-i\ge j-l/2$, then a vertex $v$ from $H_{l/2+2}\cap H_{e_{l/2+1}}$ distance-$l$ dominates $\cH$, contradicting $\gamma_{dist}(\cH,l)\ge 2$. We are done with \textbf{(i)} in \textsc{Case I}.

To see \textbf{(ii)} suppose that, for every $w\in V(\cH)\setminus \bigcup_{H\in\cM}H$ and $H_w$ containing $w$, the hyperedge $H_w$ meets $H_e$ for some $e\in E(T)$ or $H_w$ meets some $H_z$ with $z\ge 2$. Then a vertex in $H_{e_{n/2}}\cap H_{n/2+1}$ distance-$l$ dominates $\cH$, contradicting $\gamma_{dist}(\cH,l)\ge 2$. The existence of $w'$ and $H_{w'}$ can be shown analogously. This proves \textbf{(ii)} in \textsc{Case I}.

\vskip 0.2truecm

\textsc{Case II}: $l$ is odd.

\vskip 0.2truecm

The proof of this case is very similar to the previous one. Let us just show \textbf{(ii)}. Suppose that, for every $w\in V(\cH)\setminus \bigcup_{H\in\cM}H$ and $H_w$ containing $w$, the hyperedge $H_w$ meets $H_e$ for some $e\in E(T)$ or $H_w$ meets some $H_z$ with $z\ge 2$. Then a vertex in $H_{e_{\lceil n/2 \rceil}}\cap H_{\lceil n/2\rceil}$ distance-$l$ dominates $\cH$, contradicting $\gamma_{dist}(\cH,l)\ge 2$.
\end{proof}

Note that $H_w\cap H_{w'} \subset V(\cH)\setminus \bigcup_{H \in \cM}H$ and also $H_w\cup H_{w'}\cup \bigcup_{e\in E(T)}H_e \subset V(\cH)$, and thus writing $I=|H_w\cap H_{w'}|$ we obtain $|V(\cH)|\ge \max\{lk+I,(l+1)k-I\} \ge lk+k/2$. This finishes the proof of the lower bound of $\textbf{(a)}$.

\vskip 0.4truecm

Next we prove the lower bound of \textbf{(b)}. We will need the following lemma.

\begin{lemma}
\label{lem:gen_low}
For any $\gamma, l \ge 2$, let $t^*$ denote the smallest $t$ with $r_{\gamma-1}(t)\ge \frac{l-1}{2}$. Then we have
$$n_{dc}(k,\gamma,l)\ge t^*k.$$
\end{lemma}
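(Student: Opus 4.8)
The plan is to mimic the structure already used in the lower-bound argument for part \textbf{(a)}: build a maximal ``linked'' matching $\cM$ of $\cH$ together with its auxiliary graph $G_{\cM}$, and exploit the fact that $G_{\cM}$ is connected. Let $\cH$ be a connected $k$-uniform hypergraph with $\gamma_d(\cH,l)\ge\gamma$, construct the sequence $\cM_s,\cI_s,\cR_s$ exactly as before, and let $\cM$ of size $t$ and $G_{\cM}$ on vertex set $\cM$ be the resulting objects. Recall the distance estimate established in the proof of \textbf{(a)}: for every vertex $u\in\bigcup_{H\in\cM}H$ and every $w\in V(\cH)$ we have $d_{\cH}(u,w)\le 2+2r_{G_{\cM}}(u)$, and more generally if $U\subset\bigcup_{H\in\cM}H$ and we write $W_U$ for the set of those $H\in\cM$ meeting some $H_u$ with $u\in U$, then $d_{\cH}(u,w)\le 2+2\,\mathrm{exc}_{G_{\cM}}(W_U)$ for a suitable $u\in U$.

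The key step is the contrapositive: if $t<t^*$, then by the definition of $t^*$ (smallest $t$ with $r_{\gamma-1}(t)\ge\frac{l-1}{2}$) we have $r_{\gamma-1}(t')<\frac{l-1}{2}$ for all $t'\le t$, so in particular any spanning tree $T$ of $G_{\cM}$ — which has exactly $t$ vertices — satisfies $r_{\gamma-1}(T)\le r_{\gamma-1}(t)\le\lceil\frac{t}{\gamma}\rceil$ and, crucially, $r_{\gamma-1}(T)\le\frac{l-1}{2}-\frac12$, i.e.\ $r_{\gamma-1}(G_{\cM})\le r_{\gamma-1}(T)\le\lfloor\frac{l-1}{2}\rfloor-[l\text{ odd?}]$; the precise bookkeeping is that $2\,r_{\gamma-1}(G_{\cM})\le l-2$. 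Pick $W=\{H^{(1)},\dots,H^{(\gamma-1)}\}\subset\cM$ achieving $\mathrm{exc}_{G_{\cM}}(W)=r_{\gamma-1}(G_{\cM})$, choose one vertex $v_j\in H^{(j)}$ for each $j$, and set $D=\{v_1,\dots,v_{\gamma-1}\}$. For an arbitrary $w\in V(\cH)$, maximality of $\cM$ gives a hyperedge $H_w\ni w$ meeting some $H\in\cM$; since $H$ is within $G_{\cM}$-distance $r_{\gamma-1}(G_{\cM})$ of some $H^{(j)}$, chaining the linking hyperedges along a shortest $G_{\cM}$-path (exactly as in the proof of \textbf{(a)}) yields a Berge path from $v_j$ to $w$ of length at most $2+2\,r_{\gamma-1}(G_{\cM})\le l$. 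Hence $D$ is a distance-$l$ dominating set of size $\gamma-1$, contradicting $\gamma_d(\cH,l)\ge\gamma$. Therefore $t\ge t^*$, and since the $t$ hyperedges of $\cM$ are pairwise disjoint and $k$-uniform, $|V(\cH)|\ge tk\ge t^*k$.

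I would organize the write-up as: (1) recall the construction of $\cM$, $G_{\cM}$ and the distance lemma from the proof of \textbf{(a)} (citing it rather than reproving); (2) state and verify the translation $r_{\gamma-1}(G_{\cM})\le r_{\gamma-1}(t)$, using that any induced/spanning subtree on $t$ vertices can only decrease the $(\gamma-1)$-radius and invoking \lref{genradius}; (3) do the domination argument above to force $t\ge t^*$; (4) conclude with the disjointness bound $|V(\cH)|\ge tk$.

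The main obstacle is step (2)–(3): getting the parity and rounding right so that $2+2\,r_{\gamma-1}(G_{\cM})\le l$ exactly when $t<t^*$, i.e.\ verifying that ``$r_{\gamma-1}(t)\ge\frac{l-1}{2}$ fails'' really does translate into ``$\mathrm{exc}_{G_{\cM}}(W)\le\frac{l-2}{2}$'' after accounting for the $+2$ slack coming from the linking hyperedges. One has to be careful that $r_{\gamma-1}$ is defined for trees on $t$ vertices while $G_{\cM}$ may have extra edges; taking a spanning tree handles this since adding edges does not increase the $(\gamma-1)$-radius. A secondary subtlety is the edge case $t<\gamma-1$, where $W$ can be taken to be all of $\cM$, $\mathrm{exc}=0$, and a single appropriately chosen vertex already distance-$l$ dominates — this is consistent with $t^*\ge\gamma-1$ whenever $l\ge 2$, so no separate treatment is needed beyond a remark.
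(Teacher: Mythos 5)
Your proposal is correct and follows essentially the same route as the paper's proof: the same maximal matching $\cM$ with auxiliary graph $G_{\cM}$, the same distance estimate $d_{\cH}(u,w)\le 2+2r$, and the same contradiction via a $(\gamma-1)$-set of matching edges realizing $r_{\gamma-1}(G_{\cM})\le r_{\gamma-1}(t)<\frac{l-1}{2}$. Your explicit spanning-tree remark and the bookkeeping $2r_{\gamma-1}(G_{\cM})\le l-2$ just make precise a step the paper states more tersely.
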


\begin{proof}
Let $\cH$ be a connected $k$-uniform hypergraph with $\gamma_d(\cH,l)\ge \gamma$. Let $\cM$ be a maximal matching in $\cH$ obtained as in the proof of the lower bound of part \textbf{(a)}, and let us consider the auxiliary graph $G_{\cM}$. For a vertex $v\in \bigcup_{H\in \cM}H$ let $H_v$ denote the only element of $\cM$ containing $v$. Let the size of $\cM$ be $t$. We assume first that $t<t^*$, what means $r_{\gamma-1}(t)<\frac{l-1}{2}$.

Suppose that for a pair $H,H'\in\cM$ we have $d_{G_{\cM}}(H,H')=r$. Then for any pair of vertices $u \in H, v\in H'$ we have $d_{\cH}(u,v) \le 1+2r$. To see this, consider the sequence $H,H_{e_1},H_{i_1},H_{e_2},H_{i_2},\dots,H_{e_r},H'$, where $e_s$ is the $s$th edge in a shortest path from $H$ to $H'$ and $H_{i_s}$ is the $s$th vertex (i.e.\ a hyperedge in $\cH$) in the same path. 
Let $\cU \subset \cM$ be a subset of size $\gamma-1$ with $r_{G_{\cM}}(\cU)=r_{\gamma-1}(G_{\cM})\le r_{\gamma-1}(t)$, and let $L \subset V(\cH)$ be a set containing one vertex from each $U \in \cU$.

By the maximality of $\cM$, for every vertex $w$ of $\cH$ there exist an edge $H_w$ containing $w$ and an edge $H \in \cM$ with $H_w \cap H\neq \emptyset$. Therefore by the observation above and by the definition of $\cU$, there exist a $U \in \cU$ and a vertex $u \in U$ for which we have $$d_{\cH}(u,w) \le 2+2r_{G_{\cM}}(U)\le 2+2r_{\gamma-1}(t)<2+2\frac{l-1}{2}=l+1.$$
This means that if $t<t^*$ holds, then the $(\gamma-1)$-subset $L$ distance-$l$ dominates $\cH$. Therefore $\cM$ consists of at least $t^*$ hyperedges and thus $|V(\cH)| \ge t^*k$ holds.
\end{proof}

\noindent The lower bound of \textbf{(b)} follows by applying \lref{genradius} with $j=\gamma-1$ together with \lref{gen_low}, noting that $\lceil\frac{t}{\gamma} \rceil\ge \frac{l-1}{2}$ implies $\frac{t}{\gamma}>\frac{l-1}{2}-1$.

\vskip 0.3truecm

Finally, we prove the lower bound of \textbf{(c)} and \textbf{(d)}. This will follow from the claim that any maximal matching in the edge set $\cE(\cH)$ of a connected hypergraph $\cH$ with $\gamma_d(\cH,2)\ge \gamma$ has size at least $\gamma$. To see this suppose that $\cM=\{H_1,H_2,\dots, H_m\}$ is a maximal matching in $\cE(\cH)$ and for any $i=1,2,\dots, m$ let $v_i$ be a vertex of $H_i$. As any vertex $v\in V(\cH)$ is contained in a hyperedge $H_v$ which, by maximality of  $\cM$, intersects some $H_i  \in \cM$, the set $D=\{v_i:i=1,2,\dots,m\}$ distance-$2$ dominates $\cH$. Therefore $m\ge \gamma$ must hold as claimed.
\hfill $\qed$

\section{Final remarks and open problems}
\label{sec:discuss}
We addressed the problem of finding the minimum number of vertices that a connected $k$-uniform hypergraph with high domination number must contain, and we considered two main variants of the problem. For the original notion of domination and for $s$-wise domination we found general lower and upper bounds on $n(k,\gamma,s)$ in which even the order of magnitude of the second term matches. The natural open problem occurs: it can be of interest to find the constant coefficient of this second term.

\tref{dist_conn}, our main result concerning distance domination determines the asymptotics of $n_{dc}(k,\gamma,l)$ if $k$ and $\gamma$ are fixed and $l$ tends to infinity, or if all three parameters tend to infinity. Closing the gap of roughly $2k\gamma$ between the upper and lower bounds remains an interesting open problem.

We had a good reason to choose the notion of Berge paths in the definition of distance-$l$ domination. The most common other definitions of a path in hypergraphs are \textit{linear paths}, where two consecutive hyperedges of the path must share exactly one vertex (an even more restrictive notion is a \textit{loose path}) and \textit{tight paths} where the vertices $v_1,v_2,\dots, v_{k+l-1}$ of the path should be chosen in such a way that the $i$th hyperedge of the path is $\{v_i,v_{i+1},\dots, v_{i+k-1}\}$ for all $i=1,2,\dots,l$. This implies that consecutive hyperedges of a tight path share $k-1$ vertices. Note that in the construction showing the upper bound of \tref{main} no pair of hyperedges has intersection size 1 or $k-1$, therefore the construction does not contain linear or tight paths of length larger than 1 and thus distance domination would not differ from ordinary domination, had we used these notions of hypergraph paths to define distance.

There are various results on different domination numbers of a hypergraph in the literature: on the $s$-domination number in \cite{A2}, on the inverse domination number in \cite {JT}, on the total domination number in \cite{BHTY}, and on the connection of the domination number with the transversal number in \cite{ABBT}, \cite{BHT}. Let us finish with the following theorem that can be obtained simply by rearranging the lower bound of \tref{dist_conn}. In the style of Meir and Moon \cite{MM}, it uses only the size of the vertex set, the prescribed distance bound $l$, and the uniformity of $\cH$.

\begin{theorem}
\label{thm:gamma} If $\cH$ is a connected $k$-uniform hypegraph with $|V(\cH)|=n$, then
$$\gamma_{dc}(\cH,l) \le \begin{cases}
    \frac{n}{k}       & \quad \text{if } l= 2,3\text{ or}\ 4,\\
    \frac{n}{k}\cdot\frac{2}{l-3}  & \quad \text{if } l>4 .\\
  \end{cases}$$
\end{theorem}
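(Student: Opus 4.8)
The statement is obtained by rearranging the lower bounds in \tref{dist_conn}, so the plan is to read those bounds backwards as upper bounds on $\gamma_{dc}(\cH,l)$. Concretely, fix a connected $k$-uniform hypergraph $\cH$ with $n$ vertices and let $\gamma:=\gamma_{dc}(\cH,l)$ (here I write $\gamma_{dc}(\cH,l)$ for $\gamma_d(\cH,l)$ in a connected hypergraph, matching the notation of the theorem). By definition of $n_{dc}$, applying the relevant lower bound of \tref{dist_conn} to $\cH$ with this value of $\gamma$ gives $n\ge n_{dc}(k,\gamma,l)\ge (\text{lower bound expression in }\gamma)$, and solving that inequality for $\gamma$ yields the claimed upper bound. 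I would split into the same cases as \tref{dist_conn}.

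For $l=2$ and $l=3$: by parts \textbf{(c)} and \textbf{(d)} (and \pref{thm:disteasy} or the trivial $\gamma=1,2$ cases), $n\ge k\gamma$, hence $\gamma\le n/k$. For $l=4$: here I need $\gamma\le n/k$ as well. Part \textbf{(b)} requires $l\ge 4$ and $\gamma\ge 3$ and gives $n_{dc}(k,\gamma,4)>k\lceil(\tfrac{4-1}{2}-1)\gamma\rceil=k\lceil\gamma/2\rceil$, which is too weak — it only gives $\gamma\lesssim 2n/k$. So for $l=4$ I would instead invoke part \textbf{(a)} when $\gamma=2$ (giving $n\ge \tfrac{(2l+1)k}{2}=\tfrac{9k}{2}\ge 2k$, i.e. $\gamma\le n/k$ is comfortably satisfied) and handle $\gamma\ge 3$ directly. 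For $\gamma\ge 3$, $l=4$: one reuses \lref{gen_low}, whose matching $\cM$ has size $t\ge t^*$ where $t^*$ is the least $t$ with $r_{\gamma-1}(t)\ge \tfrac{l-1}{2}=\tfrac32$, equivalently $r_{\gamma-1}(t)\ge 2$; by \lref{genradius}, $r_{\gamma-1}(t)\ge \lfloor t/\gamma\rfloor$, and one checks $t=2\gamma$ suffices but more carefully that actually $t\ge\gamma$ forces $n\ge\gamma k$ — hmm, this needs a sharper bound than the published part \textbf{(b)}. The cleanest route: for $l=4$, a maximal matching $\cM$ of size $t$ gives $r(G_\cM)\le\lceil(t-1)/2\rceil$ by \lref{genradius}, and if $t\le 4$ then as in the proof of \textbf{(a)} a bounded-radius center of $G_\cM$ distance-$l$ dominates with fewer than... — so one extracts that $t$ must be large enough that $r_{\gamma-1}(t)\ge 3/2$, hence by the finer analysis $t\ge \gamma$, hence $n\ge \gamma k$. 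I would present the $l=4$ case by citing \lref{gen_low} with the observation that for $l=4$, $r_{\gamma-1}(t)\ge 3/2$ means $r_{\gamma-1}(t)\ge 2$ (integrality), and $r_{\gamma-1}(t)\le\lceil t/\gamma\rceil$ combined with the lower bound $\lfloor t/\gamma\rfloor\le r_{\gamma-1}(t)$ pins $t\ge\gamma$.

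For $l>4$: use \lref{gen_low}, so $n\ge t^*k$ where $t^*$ is least with $r_{\gamma-1}(t^*)\ge\frac{l-1}{2}$. By \lref{genradius}, $r_{\gamma-1}(t)\le\lceil\frac{t}{\gamma}\rceil$, so $r_{\gamma-1}(t)\ge\frac{l-1}{2}$ forces $\lceil\frac{t}{\gamma}\rceil\ge\frac{l-1}{2}$, i.e. $\frac{t}{\gamma}>\frac{l-1}{2}-1=\frac{l-3}{2}$, i.e. $t>\frac{(l-3)\gamma}{2}$. Hence $n\ge t^*k>\frac{(l-3)\gamma}{2}k$, and rearranging gives $\gamma<\frac{2n}{k(l-3)}$. (The non-strict form $\gamma\le\frac{n}{k}\cdot\frac{2}{l-3}$ in the theorem statement follows since $\gamma$ is an integer; I would remark this or just keep the strict inequality which is stronger.)

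\textbf{Main obstacle.} The only genuinely delicate point is the $l=4$ case, which is not directly covered by the verbatim inequality in part \textbf{(b)} of \tref{dist_conn} — part \textbf{(b)}'s bound $k\lceil(\tfrac{l-1}{2}-1)\gamma\rceil$ degenerates to $k\lceil\gamma/2\rceil$ at $l=4$, a factor $2$ weaker than needed. One must go back into the matching argument of \lref{gen_low} and exploit integrality of $r_{\gamma-1}$: the threshold $r_{\gamma-1}(t)\ge\frac{l-1}{2}=\frac32$ is the same as $r_{\gamma-1}(t)\ge 2$, which via \lref{genradius} forces $t\ge\gamma$ rather than merely $t\ge\gamma/2$. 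Once that sharpening is in hand the $l=4$ case falls in line with $l=2,3$, and the rest is pure algebraic rearrangement of already-proved inequalities.
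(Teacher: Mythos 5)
Your proposal is correct and takes essentially the paper's own (one-line) route: the theorem is obtained by rearranging the lower bounds of \tref{dist_conn}, i.e.\ parts \textbf{(c)}, \textbf{(d)} (plus \textbf{(a)} and the trivial $\gamma=1$ case) for $l=2,3$, and \lref{gen_low} combined with \lref{genradius} via $\lceil t/\gamma\rceil\ge\frac{l-1}{2}$ for $l>4$, exactly as you do. Your extra care at $l=4$ is warranted and your fix is sound---the verbatim bound of part \textbf{(b)} only yields $\gamma<2n/k$ there, and your integrality sharpening inside \lref{gen_low} ($r_{\gamma-1}(t)\ge\frac32$ means $r_{\gamma-1}(t)\ge2$, hence $\lceil t/\gamma\rceil\ge2$ and $t\ge\gamma+1$) repairs it, a quicker alternative being monotonicity $\gamma_d(\cH,4)\le\gamma_d(\cH,3)\le n/k$; note only that both your argument and the paper's tacitly assume $\gamma\ge2$ in the $l>4$ case (a single edge with $l\ge6$ violates the stated bound $\frac{2n}{k(l-3)}$), which is a blemish of the statement itself rather than of your proof.
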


It remains an open problem to make these upper bounds tight.

\end{document}